\numberwithin{equation}{section}
\renewcommand{\bar}{\overline}
\newcommand{\cf}{\textrm{cf}}
\newtheorem*{maintheorem}{Main Theorem}
\newtheorem*{keylemma}{Key Lemma}
\begin{document}

\title{The failure of $\GCH$ at a degree of supercompactness}
\author{Brent Cody}
\date{\today}

\begin{abstract}

We determine the large cardinal consistency strength of the existence of a $\lambda$-supercompact cardinal $\kappa$ such that $\GCH$ fails at $\lambda$. Indeed, we show that the existence of a $\lambda$-supercompact cardinal $\kappa$ such that $2^\lambda\geq\theta$ is equiconsistent with the existence of a $\lambda$-supercompact cardinal that is also $\theta$-tall. We also prove some basic facts about the large cardinal notion of tallness with closure.

\end{abstract}

\maketitle
\bigskip


\hyphenation{super-compact}


\section{Introduction}\label{introduction}

Woodin showed that the existence of a measurable cardinal at which $\GCH$ fails is equiconsistent with the existence of a cardinal $\kappa$ that is $\kappa^{++}$-tall (see \cite{Hamkins:TallCardinals}, \cite{Gitik:TheNegationOfTheSingularCardinalHypothesis}, or \cite{Jech:Book}), where a cardinal $\kappa$ is \emph{$\theta$-tall} if there is a nontrivial elementary embedding $j:V\to M$ with critical point $\kappa$ such that $j(\kappa)>\theta$ and $M^\kappa\subseteq M$ in $V$. In this paper we extend Woodin's result into the realm of partially supercompact cardinals. Since $\kappa$ is measurable if and only if $\kappa$ is $\kappa$-supercompact, one immediately sees several natural ways of doing this. Let us consider the following questions for cardinals $\kappa$, $\lambda$, and $\theta$.

\begin{enumerate}
\item What is the strength of the hypothesis that $\kappa$ is $\lambda$-supercompact and $\GCH$ fails at $\kappa$?
\item What is the strength of the hypothesis that $\kappa$ is $\lambda$-supercompact and $\GCH$ fails at $\kappa$ with $2^\kappa\geq\theta$?
\item What is the strength of the hypothesis that $\kappa$ is $\lambda$-supercompact and $\GCH$ fails at $\lambda$?
\item What is the strength of the hypothesis that $\kappa$ is $\lambda$-supercompact and $\GCH$ fails at $\lambda$ with $2^\lambda\geq\theta$?
\end{enumerate}

We note that Woodin's theorem answers question (1) in the case that $\lambda=\kappa$. 

The following theorem, together with Woodin's result, provides complete answers to questions (1) - (4).

\begin{maintheorem} Suppose $\lambda$ and $\theta$ are cardinals. 
\begin{enumerate}
\item For $\lambda>\kappa$, the existence of a $\lambda$-supercompact cardinal $\kappa$ such that $\GCH$ fails at $\kappa$ is equiconsistent with the existence of a $\lambda$-supercompact cardinal.
\item The existence of a $\lambda$-supercompact cardinal $\kappa$ such that $2^\kappa\geq\theta$ is equiconsistent with the existence of a $\lambda$-supercompact cardinal that is also $\theta$-tall.
\item The existence of a $\lambda$-supercompact cardinal $\kappa$ such that $\GCH$ fails at $\lambda$ is equiconsistent with the existence of a $\lambda$-supercompact cardinal that is $\lambda^{++}$-tall.
\item The existence of a $\lambda$-supercompact cardinal $\kappa$ such that the $2^\lambda\geq\theta$ is equiconsistent with the existence of a $\lambda$-supercompact cardinal that is $\theta$-tall.
\end{enumerate}
In each case above, the term ``equiconsistent'' is intended to mean that, in the forward direction the same cardinal witnessing the hypothesis also witnesses the conclusion; and in the reverse direction, the same cardinal witnessing the hypothesis witnesses the conclusion in a forcing extension.

\end{maintheorem}


The details of cardinal preservation in the various forcing extensions in parts (1) - (4) of the main theorem will be worked out below.

Questions (1) - (4) above can be seen as a special case to a more general question: 
\begin{enumerate}
\item[(5)] What kind of $\GCH$ patterns are consistent with a $\lambda$-supercompact cardinal from what type of large cardinal assumption? 
\end{enumerate}
There are some obvious resrtictions such as if $\GCH$ fails at $\kappa$, a $\lambda$-supercom\-pact cardinal, then it must fail unboundedly often below $\kappa$. Also, if $\lambda$ is a strong limit and $\GCH$ holds below and at $\kappa$ then $\GCH$ must hold up to $\lambda$. There are however some subtle issues in answering the general question which we will address in a forthcoming paper.

Let us note that Friedman and Thompson have an alternate approach to proving Woodin's result. Indeed they show in \cite{FriedmanThompson:PerfectTreesAndElementaryEmbeddings} that from the hypothesis of the existence of a $P_2\kappa$-hypermeasurable cardinal $\kappa$, which is equiconsistent with the existence of a cardinal $\kappa$ that is $\kappa^{++}$-tall, after doing a preparatory forcing iteration one may use side-by-side Sacks forcing to pump up the size of the power set of $\kappa$ to $\kappa^{++}$ while preserving the measurability of $\kappa$ using what they refer to as  the tuning fork method.

The way we will establish Math Theorem (2)-(4) in this paper is by forcing that achieves $2^\kappa>\lambda^+$, and hence $2^\lambda>\lambda^+$, and preserves the $\lambda$-supercompactness of $\kappa$, where $\lambda>\kappa$ is a cardinal. This suggests the question, can one force a violation of $\GCH$ at $\lambda$ while preserving $\GCH$ in the interval $[\kappa,\lambda)$ and preserving the $\lambda$-supercompactness of $\kappa$? It seems as though the method of surgical modification of a generic, due to Woodin, does not generalize to answer this question. However, Friedman and Honsik show in their forthcoming paper \cite{FH:Forthcoming} that the answer to this question is yes by using generalized Sacks forcing. We also note that Question (5) above is part of a larger program of attempting to describe what kind of Easton functions can be realized as the continuum function by forcing while preserving large cardinals. For more on this see \cite{FH:Easton}.

Here we provide an outline of the rest of the paper. In section \ref{sectionterminology} we discuss some notational conventions. In section \ref{sectionpreliminaries} we state some basic lemmas about lifting large cardinal embeddings that will be used throughout the paper. We will prove Main Theorem (1) in section \ref{easysection}. In section \ref{preliminariessection}, in order to prepare for the proof of Main Theorem (2) - (4), we discuss the large cardinal concept of ``tallness with closure,'' which synthesizes the concepts of $\lambda$-supercompactness and $\theta$-tallness. We prove Main Theorem (2) - (4) in section \ref{mainproofsection}.

\section{Terminology}\label{sectionterminology}

Let us now give a few remarks about our notational conventions and terminology, which are for the most part standard. For a forcing poset $\P$ and conditions $p$ and $q$ in $\P$ we write $p\leq q$ to indicate that $p$ extends $q$. We say that $\P$ is \emph{$\leq\kappa$-closed} if every descending sequence of conditions of length less than or equal to $\kappa$ has a lower bound. We say that $\P$ is \emph{$<\kappa$-closed} if every descending sequence of conditions of length less than $\kappa$ has a lower bound. We say that $\P$ is \emph{$\leq\kappa$-distributive} if the intersection of $\kappa$ open dense subsets of $\P$ is an open dense subset of $\P$. We will assume that all forcing posets $\P$ have a greatest element which we will denote by $1_\P$. Suppose $M$ and $N$ are models of $\ZFC$ and $\P$ is a forcing notion in $M$. We say that $G\subseteq \P$ is \emph{$M$-generic} for $\P$ if $G$ intersects every dense subset of $\P$ that is in $M$. We will write ``$M^\lambda\subseteq M$ in $N$'' or say that $M$ is closed under $\kappa$-sequences in $N$ to mean that if $\vec{x}$ is a sequence of elements of $M$ of length $\kappa$ in $N$, then $\vec{x}\in M$. If $j:M\to N$ is an elementary embedding between models of $\ZFC$ the \emph{critical point of $j$} is the least ordinal $\alpha$ such that $j(\alpha)\neq\alpha$ which we will denote as $\cp(j)$. For ordinals $\alpha$ we write $\cf(\alpha)$ to denote the cofinality of $\alpha$. If $A$ is a set and $f$ is a function with $A\subseteq \dom(f)$ we write $f"A$ to denote $\{f(a)\mid a\in A\}$.

\section{Basic lemmas on lifting embeddings}\label{sectionpreliminaries}

The large cardinal properties we are interested in, namely partial supercompactness and partial tallness, are witnessed by embeddings $j:M\to N$ and we will be interested in showing that these large cardinal properties are preserved in certain forcing extensions. Here we collect, for conveinence, some terminology and several standard lemmas that will be used to lift embeddings of the form $j:M\to N$ to forcing extensions $j^*:M[G]\to N[j(G)]$. In order to avoid notational complexities we will make use of the standard abuse of notation by using the same symbol, namely $j$, to denote both the ground model embedding $j$ and the lifted embedding $j^*$, even though $j$ and $j^*$ may be classes in different models. After lifting an embedding we will explicitly state in which model the lift is a class. Whenever we say that $j:M\to N$ is an embedding in $M'$ we will mean that $j$ is a nontrivial elementary embedding, $M$, $N$, and $M'$ are models of $\ZFC$, and $j$ is a class of $M'$.
For a detailed discussion of lifting embeddings as well as a proofs of Lemmas \ref{groundlemma} -- \ref{lambdadist}, the reader may consult \cite{Hamkins:Book} or Cummings' article in \cite{HandbookOfSetTheoryVolume2}.

The following two lemmas are useful for building generic objects.

\begin{lemma}\label{groundlemma}
Suppose that $M^\lambda\subseteq M$ in $V$ and there is in $V$ an $M$-generic filter $H\subseteq \Q$ for some forcing $\Q\in M$. Then $M[H]^\lambda\subseteq M[H]$ in $V$.
\end{lemma}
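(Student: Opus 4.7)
The plan is to show that any $\lambda$-sequence from $M[H]$ that exists in $V$ can be obtained inside $M[H]$ by evaluating a $\lambda$-sequence of names that already lives in $M$.

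First I would fix a sequence $\vec{x} = \langle x_\alpha : \alpha < \lambda \rangle \in V$ with each $x_\alpha \in M[H]$. For each $\alpha < \lambda$, the value $x_\alpha$ has at least one $\mathbb{Q}$-name in $M$; using the axiom of choice in $V$, I would select for each $\alpha$ a $\mathbb{Q}$-name $\dot{x}_\alpha \in M$ such that $\dot{x}_\alpha^H = x_\alpha$. This produces a sequence $\langle \dot{x}_\alpha : \alpha < \lambda \rangle \in V$ whose entries are all elements of $M$.

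Next I would apply the hypothesis $M^\lambda \subseteq M$ in $V$ to conclude that $\langle \dot{x}_\alpha : \alpha < \lambda \rangle \in M$. Since $H \in M[H]$ and the evaluation map $\dot{y} \mapsto \dot{y}^H$ is definable in $M[H]$ from the parameter $H$, I can then form the sequence $\langle \dot{x}_\alpha^H : \alpha < \lambda \rangle$ inside $M[H]$; this sequence is exactly $\vec{x}$, witnessing $\vec{x} \in M[H]$.

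The only delicate point is the choice of names in the second sentence: it must be carried out in $V$, not in $M$, because the original sequence $\vec{x}$ is assumed only to lie in $V$. Since $V$ satisfies $\mathsf{ZFC}$, this selection is legitimate, and from there the argument is purely a matter of pushing a sequence of $M$-elements through the closure hypothesis and then interpreting it in $M[H]$. No genericity of $H$ beyond its existence as a filter that meets all dense sets of $M$ is needed, and no chain-condition or closure assumption on $\mathbb{Q}$ is required.
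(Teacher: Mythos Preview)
Your argument is correct and is exactly the standard proof: pick names in $V$, use $M^\lambda\subseteq M$ to place the sequence of names in $M$, and then evaluate inside $M[H]$. The paper does not actually supply its own proof of this lemma, referring instead to \cite{Hamkins:Book} and Cummings's article in \cite{HandbookOfSetTheoryVolume2}; the argument you give is precisely the one found in those references.
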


\begin{lemma}\label{chainlemma}
Suppose that $M\subseteq V$ is a model of $\ZFC$, $M^\lambda\subseteq M$ in $V$ and $\P$ is $\lambda^+$-c.c. If $G\subseteq \P$ is $V$-generic, then $M[G]^\lambda\subseteq M[G]$ in $V[G]$.
\end{lemma}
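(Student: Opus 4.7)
The plan is to fix an arbitrary sequence $\vec{x} = \langle x_\alpha : \alpha < \lambda \rangle \in V[G]$ with $x_\alpha \in M[G]$ for each $\alpha$, and to produce a $\P$-name $\tau \in M$ with $\tau_G = \vec{x}$; this will witness $\vec{x} \in M[G]$. As usual in this context I will assume $\P \in M$. I also fix a $\P$-name $\dot{\vec{x}} \in V$ for $\vec{x}$.

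For each $\alpha < \lambda$ I will build, by a standard mixing argument, a name $\dot{y}_\alpha \in M$ for $x_\alpha$. The set of conditions $p \in \P$ for which there exists a name $\dot{z} \in M$ with $p \Vdash \dot{\vec{x}}(\check{\alpha}) = \dot{z}$ is dense, because $x_\alpha \in M[G']$ holds for every choice of generic $G'$. Working in $V$, I extract a maximal antichain $A_\alpha$ inside this dense set; the $\lambda^+$-c.c.\ of $\P$ gives $|A_\alpha| \leq \lambda$. For each $p \in A_\alpha$ I choose a witnessing name $\dot{z}^\alpha_p \in M$ and let $\dot{y}_\alpha$ be the mixed name assembled from the $\dot{z}^\alpha_p$'s below their respective $p$'s, so that $(\dot{y}_\alpha)_G = x_\alpha$.

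Now I invoke the closure hypothesis twice. Since $\P \in M$ and $M$ is closed under $\lambda$-sequences in $V$, the antichain $A_\alpha$ and the indexed family $\langle \dot{z}^\alpha_p : p \in A_\alpha \rangle$, each of size at most $\lambda$ and consisting of elements of $M$, lie in $M$; hence the mixed name $\dot{y}_\alpha$ lies in $M$ as well. Applying the closure assumption a second time to the $\lambda$-sequence $\langle \dot{y}_\alpha : \alpha < \lambda \rangle$ of elements of $M$, this sequence is in $M$, and from it I form the canonical $\P$-name $\tau \in M$ for the function $\alpha \mapsto x_\alpha$; then $\tau_G = \vec{x}$ as desired.

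The only delicate point is ensuring the mixed names $\dot{y}_\alpha$ genuinely live in $M$. The trick is to package the mixing data (the antichain $A_\alpha$ together with the chosen witnesses $\dot{z}^\alpha_p$) as a single $\lambda$-sized family of elements of $M$, so that the closure of $M$ under $\lambda$-sequences applies. The rest is routine bookkeeping.
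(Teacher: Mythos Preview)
The paper does not give its own proof of this lemma; it cites standard references. Your overall strategy---build small antichains via the chain condition, mix names, and invoke $M^\lambda\subseteq M$ to push the data into $M$---is the standard one, but there is a real gap in the density step. You claim the set $D_\alpha=\{p:\exists\,\dot z\in M,\ p\Vdash\dot{\vec x}(\check\alpha)=\dot z\}$ is dense ``because $x_\alpha\in M[G']$ for every generic $G'$.'' That is not part of the hypothesis: you only know $x_\alpha\in M[G]$ for the one fixed $G$. For a condition $q$ and a generic $G'\ni q$ different from $G$, the object $(\dot{\vec x})_{G'}(\alpha)$ can be anything at all and need not lie in $M[G']$. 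So $D_\alpha$ need not be dense, a maximal antichain inside it need not meet $G$, and hence your mixed name $\dot y_\alpha$ need not satisfy $(\dot y_\alpha)_G=x_\alpha$.

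The standard repair is to first pass, in $V[G]$, to the sequence of \emph{names}: choose $\tau_\alpha\in M$ with $(\tau_\alpha)_G=x_\alpha$ and let $\dot g\in V$ name the function $\alpha\mapsto\tau_\alpha$. Now the set of conditions deciding $\dot g(\check\alpha)$ as a ground-model object is genuinely dense, yielding maximal antichains $A_\alpha\subseteq\P$ of size $\le\lambda$ that do meet $G$. The decided values need not all lie in $M$, but the one at the unique $p\in A_\alpha\cap G$ is $\tau_\alpha\in M$; replacing the others by $\emptyset$ does no harm. Each $A_\alpha$ and each (modified) decision function is a $\le\lambda$-sized family of elements of $M$, hence lies in $M$ by closure, as does the whole $\lambda$-indexed package; from it together with $G$ one reconstructs $\langle\tau_\alpha:\alpha<\lambda\rangle$ inside $M[G]$, and therefore $\vec x\in M[G]$.
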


Suppose $j:M\to N$ is an embedding and $\P\in M$ a forcing notion. In order to lift $j$ to $M[G]$ where $G$ is $M$-generic for $\P$, we will often use Lemmas \ref{groundlemma} and \ref{chainlemma} to build an $N$-generic filter $H$ for $j(\P)$ satisfying condition (1) in Lemma \ref{liftingcriterion}.

\begin{lemma}\label{liftingcriterion}
Let $j:M\to N$ be an elementary embedding between transitive models of $\ZFC$. Let $\P\in M$ be a notion of forcing, let $G$ be $M$-generic for $\P$ and let $H$ be $N$-generic for $j(\P)$. Then the following are equivalent. 
\begin{enumerate}
\item $j"G\subseteq H$
\item There exists an elementary embedding $j^*:M[G]\to N[H]$, such that $j^*(G)=H$ and $j^*\restrict M =j$.
\end{enumerate}
\end{lemma}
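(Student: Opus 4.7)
The plan is to handle the two directions separately, putting all the substantive content into (1)$\Rightarrow$(2). The reverse direction is immediate: for any $p\in G$, the extension property yields $j(p)=j^*(p)$ and the image property yields $j^*(p)\in j^*(G)=H$, so $j"G\subseteq H$.

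For (1)$\Rightarrow$(2), I would define the lift by name-wise transport: for each $\P$-name $\tau\in M$, set
\[
j^*(\tau_G):=j(\tau)_H.
\]
Four things must then be checked: (a) this assignment is well-defined on $M[G]$; (b) the resulting map is elementary; (c) $j^*\restrict M=j$; and (d) $j^*(G)=H$. Items (a) and (b) collapse into a single task, since well-definedness is the special case of elementarity applied to the equality formula.

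The heart of the argument is the equivalence, for any formula $\phi$ and any names $\tau_1,\dots,\tau_n\in M$,
\[
M[G]\models\phi(\tau_1^G,\dots,\tau_n^G)\iff N[H]\models\phi(j(\tau_1)^H,\dots,j(\tau_n)^H).
\]
For the forward direction, the truth lemma in $M[G]$ produces a condition $p\in G$ with $p\Vdash^M_{\P}\phi(\tau_1,\dots,\tau_n)$; by elementarity of $j$, we have $j(p)\Vdash^N_{j(\P)}\phi(j(\tau_1),\dots,j(\tau_n))$; by hypothesis (1), $j(p)\in j"G\subseteq H$; and the truth lemma in $N[H]$ then gives the right-hand side. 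The converse runs identically, replacing $\phi$ by $\neg\phi$ and using that some $p\in G$ decides $\phi$. This simultaneously delivers (a) and (b).

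For (c), apply the definition to the canonical name $\check x$ for $x\in M$: by elementarity $j(\check x)$ is the canonical name for $j(x)$ in $N$, so $j^*(x)=j(\check x)_H=j(x)$. For (d), apply the definition to the canonical name $\dot G$ for the generic filter; by elementarity $j(\dot G)$ is the canonical name in $N$ for the $j(\P)$-generic filter, hence $j^*(G)=j(\dot G)_H=H$. The only potential obstacle is bookkeeping rather than substance: one must keep careful track of which model the forcing relation is being evaluated in at each step, and observe that hypothesis (1) is exactly the ingredient that links the $M$-side forcing assertion to the $N[H]$-side satisfaction.
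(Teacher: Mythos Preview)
Your argument is correct and is precisely the standard proof of this lifting criterion. Note, however, that the paper itself does not give a proof of this lemma at all: it simply cites \cite{Hamkins:Book} and Cummings' chapter in \cite{HandbookOfSetTheoryVolume2} for the proofs of Lemmas \ref{groundlemma}--\ref{lambdadist}, so there is nothing to compare against beyond observing that your write-up matches the textbook treatment found in those references.
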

\noindent We say that the embedding $j^*$ in condition (2) above is a $\emph{lift}$ of $j$.

Suppose $j:V\to M$ is an elementary embedding. We say that a set $S\in V$ \emph{generates $j$ over $V$} if $M$ is of the form 
\begin{align}
M&=\{j(h)(s)\mid h:[A]^{<\omega}\to V, s\in [S]^{<\omega}, h\in V\}.\label{seedrep}
\end{align}
where $A\in V$ and $S\subseteq j(A)$. We will often make use of the following lemma which states that the above representation (\ref{seedrep}) of the target model of an elementary embedding remains valid after forcing.

\begin{lemma}\label{seedpreservationlemma}
If $j:V\to M$ is an elementary embedding generated over $V$ by a set $S\in V$ then any lift of this embedding to a forcing extension $j^*:V[G]\to M[j^*(G)]$ is generated by $S$ over $V[G]$.
\end{lemma}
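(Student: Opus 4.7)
The plan is to verify both directions of the generation relation for the lifted embedding $j^*$. Fix $A \in V$ with $S \subseteq j(A)$ witnessing that $S$ generates $j$ over $V$, so $M = \{j(h)(s) : h \in V,\ h:[A]^{<\omega}\to V,\ s\in [S]^{<\omega}\}$. Since $A \in V \subseteq V[G]$ and $S \subseteq j(A) = j^*(A)$, the same parameter $A$ should serve as the domain parameter in $V[G]$ for the lifted embedding.

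One containment is immediate: any object of the form $j^*(h)(s)$ with $h \in V[G]$, $h:[A]^{<\omega}\to V[G]$, and $s \in [S]^{<\omega}$ lies in $M[j^*(G)]$ because $j^*$ has range $M[j^*(G)]$.

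For the nontrivial direction, fix $y \in M[j^*(G)]$ and choose a $j(\P)$-name $\tau \in M$ with $y = \tau^{j^*(G)}$, where $\P \in V$ is the forcing giving rise to $G$. The hypothesis on $M$ then supplies $h \in V$ with $h:[A]^{<\omega}\to V$ and $s \in [S]^{<\omega}$ such that $\tau = j(h)(s)$. The main step is to produce $h' \in V[G]$ with $h':[A]^{<\omega}\to V[G]$ satisfying $j^*(h')(s) = y$. The natural candidate, definable uniformly in $V[G]$ from the parameters $h$, $\P$, and $G$, is
\[
h'(x) = \begin{cases} h(x)^G & \text{if } h(x) \text{ is a } \P\text{-name in } V, \\ \emptyset & \text{otherwise.} \end{cases}
\]
Because $h \in V$ and $\P \in V$, we have $j^*(h) = j(h)$ and $j^*(\P) = j(\P)$, so elementarity transfers the defining formula to $M[j^*(G)]$: the function $j^*(h')$ is obtained from $j(h)$, $j(\P)$, and $j^*(G)$ by the same recipe. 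Evaluating at $s$ and using that $j(h)(s) = \tau$ is a $j(\P)$-name in $M$ then gives $j^*(h')(s) = \tau^{j^*(G)} = y$, completing the representation.

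The only delicate point is insisting on a uniform convention for $h'(x)$ at those $x$ for which $h(x)$ is not a $\P$-name, so that the defining formula really is preserved by $j^*$. Beyond this bookkeeping, the argument reduces to the observation that the witnesses $A$, $S$, $h$, and $s$ are unchanged under the lift and that names in $M$ simply get re-evaluated using the new generic $j^*(G)$.
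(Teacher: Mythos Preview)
The paper does not actually supply its own proof of this lemma: it is listed among the ``basic lemmas on lifting embeddings'' in Section~\ref{sectionpreliminaries}, and for proofs the reader is referred to \cite{Hamkins:Book} and Cummings' article in \cite{HandbookOfSetTheoryVolume2}. Your argument is correct and is the standard one: represent an arbitrary $y\in M[j^*(G)]$ by a $j(\P)$-name $\tau\in M$, write $\tau=j(h)(s)$ using the seed representation of $M$, and then pass to the function $h'\in V[G]$ sending $x$ to $h(x)^G$ (with a default value when $h(x)$ is not a name), so that elementarity of $j^*$ together with $j^*\restrict V=j$ yields $j^*(h')(s)=\tau^{j^*(G)}=y$. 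The bookkeeping you flag---fixing a uniform default so the definition of $h'$ is given by a single first-order formula with parameters $h,\P,G$---is exactly what makes the elementarity step go through cleanly.
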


We will often make use of the next standard lemma which states that embeddings witnessed by extenders are preserved by highly distributive forcing.

\begin{lemma}\label{lambdadist}
Suppose $j:V\to M$ is an elementary embedding with critical point $\kappa$ and $M=\{j(h)(s)\mid h:A\to V, s\in [S]^{<\omega}, \textrm{ and } h\in V\}$ where $|A|=\lambda$ and $S\subseteq j(A)$ is some set in $V$. If $\P\in V$ is $\leq\lambda$-distributive forcing then $j"G$ generates an $M$-generic filter for $j(\P)$ and $j$ lifts uniquely to $j^*:V[G]\to M[H]$ in $V[G]$ where $j^*(G)=H$ is the filter on $j(\P)$ generated by $j"G$.
\end{lemma}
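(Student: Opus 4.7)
The plan is to invoke Lemma \ref{liftingcriterion}, which would reduce the problem to showing that $j"G$ generates an $M$-generic filter $H$ on $j(\P)$. I would first observe that since $j$ is order-preserving on $\P$ and $G$ is a filter, the image $j"G$ is a directed subset of $j(\P)$, so its upward closure $H$ in $j(\P)$ is a filter, and $H$ is definable from $j"G$ in $V[G]$. The lifting criterion would then yield $j^*:V[G]\to M[H]$ with $j^*(G) = H$ and $j^*\restrict V = j$, defined in $V[G]$ by $j^*(\dot\sigma_G) = j(\dot\sigma)_H$, from which uniqueness of the lift is immediate. The real content of the lemma therefore lies in verifying $M$-genericity of $H$.

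For this, I would fix an arbitrary dense open subset $D\in M$ of $j(\P)$ and use the seed representation of $M$ to write $D = j(h)(s)$ for some $h:[A]^{<\omega}\to V$ in $V$ and some $s\in[S]^{<\omega}\subseteq[j(A)]^{<\omega}$. The key trick is to modify $h$ so that distributivity can be applied: define $h'\in V$ by setting $h'(a) = h(a)$ when $h(a)$ is a dense open subset of $\P$ and $h'(a) = \P$ otherwise. Since $D$ is itself dense open in $j(\P)$, elementarity gives $j(h')(s) = j(h)(s) = D$. Now each $h'(a)$ is dense open in $\P$, and $|[A]^{<\omega}| = \lambda$; since $\P$ is $\leq\lambda$-distributive in $V$, the set
\[
E := \bigcap_{a\in[A]^{<\omega}} h'(a)
\]
is dense open in $\P$ in $V$.

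By $V$-genericity of $G$, I would pick $p\in G\cap E$. Elementarity then yields $j(E) = \bigcap_{a\in[j(A)]^{<\omega}} j(h')(a) \subseteq j(h')(s) = D$, so $j(p)\in j(E)\subseteq D$. Thus $j(p)\in j"G\cap D$ witnesses that $j"G$ meets $D$, proving $M$-genericity of $H$.

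The main obstacle I anticipate is the bookkeeping around the seed representation, in particular the passage from $h$ to $h'$: one needs to be sure that replacing $h$ by $h'$ preserves the crucial equality $j(h')(s) = D$ so that the $\leq\lambda$-distributivity of $\P$ can be applied to a family of genuinely dense open sets indexed by all of $[A]^{<\omega}$. Once this is in place, the remaining argument is a routine application of distributivity, elementarity, and the lifting criterion.
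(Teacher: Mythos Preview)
Your proposal is correct and follows essentially the same route as the paper's (commented-out) proof: represent an arbitrary dense open $D\in M$ via the seed hull as $j(h)(s)$, replace $h$ by a function whose values are all dense open subsets of $\P$, intersect these $\leq\lambda$ many sets using distributivity, pick $p\in G$ in the intersection, and observe $j(p)\in j(E)\subseteq D$; then invoke the lifting criterion. The only differences are cosmetic---you make the passage from $h$ to $h'$ and the uniqueness of the lift explicit, and you use domain $[A]^{<\omega}$ (matching the seed representation (\ref{seedrep})) rather than $A$ as in the lemma's statement, which is harmless since $|[A]^{<\omega}|=\lambda$.
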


\section{Proof of Main Theorem (1)}\label{easysection}

Our proof of the main theorem will use a preparatory forcing notion called the lottery preparation, which was introduced by Hamkins in \cite{Hamkins:TheLotteryPreparation}. The lottery preparation works uniformly as a generalized Laver preparation in a variety of large cardinal contexts. Here we give a brief introduction to the lottery preparation.

The \emph{lottery sum} of a collection of posets $\{(\Q_\alpha,\leq_\alpha)\mid\alpha<\kappa\}$ is
$$\bigoplus\{\Q_\alpha\mid\alpha<\kappa\}:=\{\emptyset \}\cup\bigcup_{\alpha<\kappa}\{(\alpha,q)\mid q\in \Q_\alpha\}$$
where the ordering on the lottery sum is defined by (1) $(\alpha,q)\leq\emptyset$ for all $\alpha<\kappa$ and $q\in \Q_\alpha$ and (2) $(\alpha,p)\leq(\beta,q)$ if and only if $\alpha=\beta$ and $p\leq_\alpha q$. As Hamkins says, a generic for the lottery sum of a collection of posets chooses a poset and forces with it. For a detailed account of the lottery preparation see \cite{Hamkins:TheLotteryPreparation}.

We now define the lottery preparation of $\kappa$, which is a reverse Easton support iteration of length $\kappa$. We say a poset $\Q$ is \emph{allowed} at stage $\gamma$ if $\Q$ is $<\gamma$-strategically closed; note that ``$<\gamma$-strategic closure'' will not play a role in this paper so the reader that is unfamiliar with this concept may take this to simply mean $<\gamma$-closed. For a partial function $f\subseteq\kappa\times\kappa$ we define the \emph{lottery preparation of $\kappa$ with respect to $f$} to be the reverse Easton support forcing iteration such that if $\gamma<\kappa$ is inaccessible and $f"\gamma\subseteq\gamma$ then the stage $\gamma$ forcing is the lottery sum in $V^{\P_\gamma}$ of all allowed posets in $H(f(\gamma)^+)$ and otherwise the stage $\gamma$ forcing is trivial.  Suppose $\P$ is the lottery preparation of $\kappa$ with respect to a partial function $f\subseteq\kappa\times\kappa$ and that $\Q$ is a poset that appears in the stage $\gamma<\kappa$ lottery sum. Then the condition $\langle p_\alpha\mid\alpha<\kappa\rangle$ with $p_\alpha=\emptyset$ for $\alpha\neq\gamma$ and $p_\gamma=1_\Q$ \emph{opts} for $\Q$ at stage $\gamma$ where $1_\Q$ is a $\P_\gamma$-name for the top element of $\Q$.


The lottery preparation $\P$ of some large cardinal $\kappa$ is usually used with respect to a partial function $f\subseteq\kappa\times\kappa$ with the Menas property, such as a function added by fast function forcing (see section \ref{fffsection}). Using the lottery preparation with respect to such a function insures that $j(\P)$, where $j$ is an elementary embedding witnessing the large cardinal property at hand, has a tail with a high degree of closure.

We will now show that given a $\lambda$-supercompact cardinal $\kappa$, one may pump up the power set of $\kappa$ to have size at least $\lambda^+$ while maintaining the $\lambda$-supercompactness of $\kappa$. This will establish Main Theorem (1) because if we assume $\lambda>\kappa$ then in a forcing extension we will have $2^\kappa\geq\lambda^+\geq\kappa^{++}$. We note that this result essentially follows from the methods of \cite{ApterHamkins:IndestructibilityAndTheLevelByLevelAgreement}.

\begin{theorem}\label{easy}
If $\kappa$ is $\lambda$-supercompact then there is a forcing extension preserving this in which $2^\kappa\geq\lambda^+$.
\end{theorem}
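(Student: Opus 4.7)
The plan is to use the lottery preparation of $\kappa$ followed by Cohen forcing to blow up $2^\kappa$, and to lift a witnessing supercompactness embedding through both stages. Specifically, by a preliminary fast function forcing if necessary, I would assume $\kappa$ carries a Menas function $f\subseteq\kappa\times\kappa$, and let $\P$ be the lottery preparation of $\kappa$ with respect to $f$. Then I force with $\P * \dot{\Q}$, where $\Q=\textrm{Add}(\kappa,\lambda^+)^{V^\P}$, obtaining a generic $G_0 * g$. This manifestly gives $2^\kappa\geq\lambda^+$ in $V[G_0 * g]$, and cardinals are preserved since $\P$ is $\kappa$-c.c. and $\Q$ is $<\kappa$-closed and $\kappa^+$-c.c.\ in $V[G_0]$.

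To verify that $\kappa$ remains $\lambda$-supercompact, I would fix an embedding $j:V\to M$ in $V$ with $\cp(j)=\kappa$, $j(\kappa)>\lambda^+$, $M^\lambda\subseteq M$, and $j(f)(\kappa)\geq\lambda^+$ (the last arranged by the Menas property of $f$). In $M$, the iteration $j(\P)$ factors as $\P * \dot{\Q}_\kappa * \dot{\P}_{\textrm{tail}}$, and by the Menas bound the poset $\textrm{Add}(\kappa,\lambda^+)$ appears in the stage-$\kappa$ lottery $\Q_\kappa$; I would opt for it. Then $j(\P * \dot{\Q})=\P * \dot{\Q} * \dot{\P}_{\textrm{tail}} * j(\dot{\Q})$, and the task is to lift $j$ through $\P * \dot{\Q}$ to a $\lambda$-supercompactness embedding in $V[G_0 * g]$.

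The lift proceeds in two steps. First, working in $V[G_0 * g]$, I would build an $M[G_0 * g]$-generic $G_1\subseteq \P_{\textrm{tail}}$ via Lemma \ref{lambdadist} and the high distributivity of $\P_{\textrm{tail}}$ in $M[G_0 * g]$ (since by the Menas choice of $f$, the next nontrivial stage of $j(\P)$ lies far above $\lambda^+$). Lemma \ref{liftingcriterion} then produces a lift $j:V[G_0]\to M[G_0 * g * G_1]$ in $V[G_0 * g]$, and Lemmas \ref{groundlemma}--\ref{chainlemma} imply $M[G_0 * g * G_1]^\lambda\subseteq M[G_0 * g * G_1]$ in $V[G_0 * g]$. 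Second, I would build an $M[G_0 * g * G_1]$-generic $h\subseteq j(\Q)$ with $j"g\subseteq h$; the natural master condition is $q^*=\bigcup j"g$, a partial function on $j"\lambda^+\times\kappa$, and one builds $h$ below $q^*$ by diagonalizing through the dense sets of $j(\Q)$ in $M[G_0 * g * G_1]$ using the $\lambda$-closure of the latter. A final application of Lemma \ref{liftingcriterion} then yields $j:V[G_0 * g]\to M[G_0 * g * G_1 * h]$.

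The main obstacle is the second step: since $q^*$ has size $\lambda^+$, one cardinal beyond the closure of $M$, one cannot directly assert $q^*\in M[G_0 * g * G_1]$. The standard resolution, following \cite{ApterHamkins:IndestructibilityAndTheLevelByLevelAgreement}, is to exploit the factorization $j(\Q)\cong \textrm{Add}(j(\kappa),j"\lambda^+)\times \textrm{Add}(j(\kappa),j(\lambda^+)\setminus j"\lambda^+)$, where the first factor is entirely pinned down by $g$ via $j$ (requiring no genericity beyond what $g$ already supplies), while the second factor is a free Cohen poset over the $\lambda$-closed $M[G_0 * g * G_1]$ over which a generic can be diagonalized in $V[G_0 * g]$. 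This sidesteps the need for $q^*$ itself to lie in $M[G_0 * g * G_1]$ and delivers the desired $h$, completing the preservation of $\lambda$-supercompactness.
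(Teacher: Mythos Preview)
Your overall strategy and the first lift are essentially as in the paper, though your invocation of Lemma \ref{lambdadist} for building $G_1$ is misplaced: that lemma lifts $j$ through $\leq\lambda$-distributive forcing in $V$, whereas here you need a direct diagonalization over the at most $\lambda^+$ dense subsets of $\P_{\textrm{tail}}$ in $M[G_0*g]$, using that $M[G_0*g]$ is $\lambda$-closed in $V[G_0*g]$. This is minor.

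The second lift, however, has a genuine gap. Your proposed factorization
\[
j(\Q)\cong \Add(j(\kappa),j"\lambda^+)\times \Add(j(\kappa),j(\lambda^+)\setminus j"\lambda^+)
\]
fails for two reasons. First, $j"\lambda^+$ has size $\lambda^+$ and need not lie in $M[j(G)]$, which is only $\lambda$-closed; hence this is not a decomposition available in $M[j(G)]$, and genericity over $M[j(G)]$ cannot be checked factor by factor. Second, and more seriously, the first factor is \emph{not} pinned down by $g$: since each $p\in g$ has $j(p)=j"p$ with domain contained in $j"\lambda^+\times\kappa$, the union $\bigcup j"g$ is a function on $j"\lambda^+\times\kappa$ only, covering just the first $\kappa$ rows of the first factor rather than all $j(\kappa)$ rows. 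So $g$ supplies nothing close to a generic for $\Add(j(\kappa),j"\lambda^+)$.

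The argument the paper actually uses (following \cite{ApterHamkins:IndestructibilityAndTheLevelByLevelAgreement}) is different: one diagonalizes over the $\lambda^+$ maximal antichains of $j(\Q)$ in $M[j(G)]$, maintaining at each stage a condition $r$ compatible with every element of $j"H$. Given an antichain $A$, the $j(\kappa^+)$-c.c.\ together with $\sup j"\lambda^+=j(\lambda^+)$ yields $A\subseteq\Add(j(\kappa),j(\alpha))$ for some $\alpha<\lambda^+$; one then uses the \emph{local} master condition $q=\bigcup j"(H\cap\Add(\kappa,\alpha))$, which has size $\leq\lambda$ and so lies in $M[j(G)]$, extends $r\cup q$ inside $\Add(j(\kappa),j(\alpha))$ to decide $A$, and verifies that the extension remains compatible with all of $j"H$. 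This local-master-condition trick is the missing idea; it avoids any need for the global object $\bigcup j"H$ to be a condition or for $j"\lambda^+$ to be in $M[j(G)]$.
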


\begin{proof} Since any $\lambda$-supercompact cardinal is also $\lambda^{<\kappa}$-super\-compact we may assume without loss of generality that $\lambda^{<\kappa}=\lambda$. We assume $2^\kappa\leq\lambda$ because otherwise the theorem is trivial. We may further assume that $2^{\lambda}=\lambda^+$ since the forcing to achieve this is $\leq\lambda$-distributive and hence preserves the $\lambda$-supercompactness of $\kappa$. Let $j:V\to M$ witness that $\kappa$ is $\lambda$-supercompact. By Lemma \ref{seedpreservationlemma}, we may assume that each element of $M$ is of the form $j(h)(j"\lambda)$ where $h:P_\kappa\lambda\to V$ is in $V$. Note that $(\lambda^+)^M=\lambda^+$ since $M$ is closed under $\lambda$ sequences in $V$. Thus since $j(\kappa)$ is inaccessible in $M$ we have that $j(\kappa)>\lambda^+$. Now let $\P$ be the lottery preparation of $\kappa$ relative to a partial function $f\subseteq\kappa\times\kappa$ with the Menas property $j(f)(\kappa)>\lambda$. Let $G$ be $V$-generic for $\P$. Let $\Q:=\Add(\kappa,\lambda^+)^{V[G]}$ and let $H$ be $V[G]$-generic for $\Q$. 

By elementarity, $j(\P)$ is the lottery preparation of $j(\kappa)$ defined relative to $j(f)$. Since $M$ is closed under $\lambda$-sequences in $V$ we know that the first $\kappa$ stages of $\P$ and $j(\P)$ agree. Since $j(f)\restrict\kappa=f$ it follows that $j(f)"\kappa\subseteq\kappa$ and since $\kappa$ is inaccessible in $M$ we see that the stage $\kappa$ forcing in $j(\P)$ is the lottery sum in $M[G]$ of all allowed posets in $H(j(f)(\kappa)^+)$. Since $\Q$ is in $M[G]$ and also in $H(j(f)(\kappa)^+)^{M[G]}$ it follows that $\Q$ appears in the stage $\kappa$ lottery sum in $j(\P)$. Thus, we may factor $j(\P)$ below a condition $p$ that opts for $\Q$ at stage $\kappa$ as $j(\P)\restrict p\cong \P * \Q * \P_{tail}$ where $\P_{tail}$ is a term for the forcing $j(\P)$ beyond stage $\kappa$. For example, $p$ could be the condition $\langle p_\alpha \mid \alpha<j(\kappa)\rangle$ such that $p_\kappa=1_\Q$ and $p_\alpha=\emptyset$ for every other $\alpha<j(\kappa)$. Since $j(f)(\kappa)>\lambda$ and nontrivial forcing occurs in $j(\P)$ only at closure points of $j(f)$, we see that the next stage of nontrivial forcing beyond $\kappa$ in $j(\P)$ is indeed beyond $\lambda$ and from this it follows that $\P_{tail}$ is a term for $\leq\lambda$-closed forcing. Since $M\subseteq V$ we see that $G$ is $M$-generic for $\P$ and $H$ is $M[G]$-generic for $\Q$. Thus $\P_{tail}$ is $\leq\lambda$-closed in $M[G][H]$. Furthermore, it follows from Lemma \ref{chainlemma} that $M[G][H]$ is closed under $\lambda$-sequences in $V[G][H]$ because $\P*\Q$ is $\kappa^+$-c.c.. Since in $V$, $\P$ has at most $2^\kappa\leq\lambda$-many dense subsets we see that $\P_{tail}$ has at most $j(\lambda)$-many dense subsets in $M[G][H]$ where $|j(\lambda)|^V\leq (\lambda)^{\lambda^{<\kappa}}=\lambda^\lambda=2^\lambda=\lambda^+$. Thus in $V[G][H]$, by constructing a descending sequence of conditions, we may build an $M[G][H]$-generic for $\P_{tail}$, call it $G_{tail}$. We may now lift the embedding to $j:V[G]\to M[j(G)]$ where $j(G)=G*H*G_{tail}$ and the lifted embedding is a class of $V[G][H]$. It follows from Lemma \ref{groundlemma} that $M[j(G)]$ is closed under $\lambda$ sequences in $V[G][H]$.

Now we lift the embedding through $\Q$. To do this we follow the method used in \cite{ApterHamkins:IndestructibilityAndTheLevelByLevelAgreement} Corollary 10. Let $A\subseteq j(\Q)=\Add(j(\kappa),j(\lambda^+))$ be a maximal antichain in $M[j(G)]$. Let $r\in j(\Q)$ be a condition that is compatible with every element of $j"H$. We will show that there is a condition $r'\leq r$ that decides $A$ that is still compatible with every element of $j"H$. Since $j(\Q)$ is $j(\kappa^+)$-c.c. we know that $|A|\leq j(\kappa)$ in $M[j(G)]$. Since $\cf(\lambda^+)>\lambda$ we have $\sup j"\lambda^+=j(\lambda^+)$ and this implies that $A\subseteq \Add(j(\kappa),j(\alpha))$ for some $\alpha<\lambda^+$. We fix such an $\alpha$ so that also $r\in \Add(j(\kappa),j(\alpha))$. Let $q=\bigcup(j"(H\cap\Add(\kappa,\alpha)))$. Since $j(p)=j"p$ for $p\in \Add(\kappa,\alpha)$ we have $|q|\leq\lambda<j(\kappa)$ and thus $q\in M[j(G)]$ is a master condition in $\Add(j(\kappa),j(\alpha))$ (which is a complete subposet of $\Add(j(\kappa),j(\lambda^+))$). Now since $r$ is compatible with every element of $j"H$ we see that $r$ and $q$ are compatible in $\Add(j(\kappa),j(\alpha))$.  Choose $r'\in \Add(j(\kappa),j(\alpha))$ below $r$ and $q$ deciding $A$. We will show that $r'$ remains compatible with $j"H$. Consider $j(p)$ for $p\in H$. We may split $p$ into two pieces: $p=p_0\cup p_1$ where $\dom(p_0)\subseteq \alpha\times\kappa$ and $\dom(p_1)\subseteq [\alpha,\lambda^+)\times\kappa$. Then $j(p)=j(p_0)\cup j(p_1)$ where the domain of $j(p_1)$ is disjoint from the domain of any element of $\Add(j(\kappa),j(\alpha))$. Thus, $r'$ is compatible with $j(p_1)$ in $\Add(j(\kappa),j(\lambda^+))$. Furthermore, we have $r'\leq q\leq j(p_0)$ and hence $r'$ is compatible with $j(p)$.


Since $\Q$ has $\lambda^+$-many antichains we may iterate this to choose a decreasing sequence of conditions in $V[G][H]$ meeting all the antichains of $\Add(j(\kappa),j(\lambda^+))$ such that each element of the sequence is compatible with $j"H$. Let $j(H)$ be the filter generated by this sequence. Then $j(H)$ is an $M[j(G)]$-generic for $\Add(j(\kappa),j(\lambda^+))$ with $j"H\subseteq j(H)$. Hence we may lift the embedding to $j:V[G][H]\to M[j(G)][j(H)]$ in $V[G][H]$, which implies that $\kappa$ is $\lambda$-supercompact in $V[G][H]$.

\end{proof}

\section{Tallness with closure}\label{preliminariessection}

\subsection{Definitions and basic facts}\label{tallness}

Here we include some basic definitions and results about $\theta$-tall cardinals, and $\theta$-tall cardinals with closure $\lambda$, where $\lambda$ is some cardinal and $\theta$ is an ordinal. A cardinal $\kappa$ is called \emph{$\theta$-tall} if there is a nontrivial elementary embedding $j:V\to M$ with critical point $\kappa$ such that $j(\kappa)>\theta$ and $M^\kappa\subseteq M$. Woodin and Gitik used such cardinals to determine the strength of the failure of $\GCH$ at a measurable cardinal (see \cite{Gitik:TheNegationOfTheSingularCardinalHypothesis}), and Hamkins has studied them in their own right in \cite{Hamkins:TallCardinals}. Hamkins says that $\kappa$ is \emph{$\theta$-tall with closure $\lambda$} if there is an elementary embedding $j:V\to M$ with $\cp(j)=\kappa$, $j(\kappa)>\theta$, and $M^\lambda\subseteq M$ in $V$. By following a $\lambda$-supercompactness embedding with a $\theta$-tallness embedding one may show that if $\kappa$ is $\theta$-tall and $\lambda$-supercompact, then $\kappa$ is $\theta$-tall with closure $\lambda$. Indeed, a cardinal $\kappa$ is $\theta$-tall and $\lambda$-supercopmact if and only if it is $\theta$-tall with closure $\lambda$.

We will need the following lemma.

\begin{lemma}\label{generators}
If $\kappa$ is $\theta$-tall with closure $\lambda$ then there is an embedding witnessing this $j:V\to M$ such that $$M=\{j(h)(j"\lambda,\alpha)\mid \alpha\leq\delta \textrm{ and } h:P_\kappa\lambda\times\kappa\to V \textrm{ is a function in $V$}\}$$ where $\delta=(\theta^\lambda)^M$. 
\end{lemma}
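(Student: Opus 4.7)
The plan is to start from an arbitrary witness $j_0:V\to M_0$ to the hypothesis and pass to the embedding defined by the seed hull it generates from $j_0"\lambda$ together with the ordinals below $(\theta^\lambda)^{M_0}$.

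First I would fix any elementary embedding $j_0:V\to M_0$ with $\cp(j_0)=\kappa$, $j_0(\kappa)>\theta$, and $M_0^\lambda\subseteq M_0$ in $V$. Let $s=j_0"\lambda$ and $\delta_0=(\theta^\lambda)^{M_0}$; since $\kappa$ is inaccessible in $V$, $j_0(\kappa)$ is inaccessible in $M_0$, and because $\theta,\lambda<j_0(\kappa)$ we have $\delta_0<j_0(\kappa)$. Form the seed hull
\[
X := \{\, j_0(h)(s,\alpha) : h\in V,\ h:P_\kappa\lambda\times\kappa\to V,\ \alpha\le\delta_0 \,\} \subseteq M_0.
\]

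The first key step is to verify $X\prec M_0$ by the Tarski--Vaught criterion. The only nontrivial point is the reduction of finitely many parameters to a single one: fix in $V$ a G\"odel-style bijection $b:\kappa\to[\kappa]^{<\omega}$, so that $j_0(b)$ restricts in $M_0$ to a bijection $\delta_0\to[\delta_0]^{<\omega}$ (using that $\delta_0$ is an infinite cardinal of $M_0$). Via this coding, a tuple $(\alpha_1,\dots,\alpha_n)$ of ordinals $\le\delta_0$ becomes a single $\alpha\le\delta_0$, the $h_1,\dots,h_n$ combine into a single $h\in V$, and any Skolem witness manufactured in $V$ lives in $X$. Mostowski-collapse $X$ to a transitive $M$ via $\pi:X\to M$, set $j=\pi\circ j_0:V\to M$, and let $\sigma=\pi^{-1}$. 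Every $\alpha\le\delta_0$ is in $X$ (take $h(p,\beta)=\beta$, so $j_0(h)(s,\alpha)=\alpha$), so $\pi$ is the identity on $\delta_0+1$. Consequently $\cp(j)=\kappa$, $j(\kappa)>\theta$, $\pi(s)=j"\lambda$, and by elementarity of $\sigma$ we get $(\theta^\lambda)^M=\delta_0$, so the $\delta$ of the statement coincides with $\delta_0$. Applying $\pi$ to the definition of $X$ then yields the claimed representation of $M$.

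The main obstacle is verifying $M^\lambda\subseteq M$ in $V$, which amounts to showing that the hull $X$ is closed under $\lambda$-sequences from $V$. I would adapt the standard closure argument for a $\lambda$-supercompactness ultrapower: given $\vec y\in M^\lambda\cap V$, choose $V$-representations $\sigma(y_\xi)=j_0(h_\xi)(s,\alpha_\xi)$ and assemble a single $G\in V$ on $P_\kappa\lambda\times\kappa$ from the data $(h_\xi,\alpha_\xi)_{\xi<\lambda}$, using the ``$\xi\in p$'' indexing trick in the first argument together with the G\"odel coding of the second argument to carry the tallness parameters $\alpha_\xi$. Inside $M_0$ one then reassembles $\sigma(\vec y)$ from $j_0(G)(s,\alpha)$ via the order-isomorphism $s\to\lambda$ that exists in $M_0$ and the fact that $\sigma$ fixes ordinals $\le\delta_0$. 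This is the delicate point and the place where having the supercompactness seed $s$ in the generator does essential work beyond what the tallness ordinals alone could accomplish; once it is in place, $\sigma(\vec y)\in X$, and hence $\vec y\in M$, completing the verification that $j:V\to M$ witnesses $\kappa$ is $\theta$-tall with closure $\lambda$ and that $M$ has the desired seed presentation.
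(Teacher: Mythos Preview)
Your approach is essentially identical to the paper's: start with an arbitrary witness $j_0:V\to M_0$, form the seed hull $X$ generated by $j_0"\lambda$ together with the ordinals $\le(\theta^\lambda)^{M_0}$, Mostowski-collapse, and compose. The paper's proof is extremely terse---it simply asserts that $X\prec M_0$ is ``routine'' and that ``it follows'' the collapsed embedding has all the required properties---whereas you have supplied the details (Tarski--Vaught via coding of finite tuples, why $\pi$ fixes ordinals $\le\delta_0$, why $(\theta^\lambda)^M=\delta_0$, and the closure argument for $M^\lambda\subseteq M$), but the underlying construction is the same.
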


\begin{proof}
Let $j_0:V\to M_0$ witness the $\theta$-tallness with closure $\lambda$ of $\kappa$ and let $$X=\{j_0(h)(j_0"\lambda,\alpha)\mid \alpha\leq\delta \textrm{ and } h:P_\kappa\lambda\times\kappa\to V \textrm{ with } h\in V\}$$ where $\delta:=(\theta^\lambda)^M$. It is routine to verify that $X\elesub M_0$. Let $\pi:X\to M$ be the Mostowski collapse of $X$ and define an elementary embedding $j:V\to M$ by $j= \pi \circ j_0$ an let $k:=\pi^{-1}:M\to X\subseteq M_0$. It follows that $j$ is the desired embedding.
\end{proof}

We will often make use of the easy fact that if $\kappa$ is $\theta$-tall with closure $\lambda$, then it is $\theta^\lambda$-tall with closure $\lambda^{<\kappa}$, which we demonstrate now. If $\kappa$ is $\lambda$-supercopmact it is easy to see that $\kappa$ must be $\lambda^{<\kappa}$-supercompact. By following a $\lambda^{<\kappa}$-supercompactness embedding by a $\theta$-tallness embedding we obtain $j:V\to M$ witnessing that $\kappa$ is $\theta$-tall with closure $\lambda^{<\kappa}$, then since $j(\kappa)$ is inaccessible in $M$ and $M^{\lambda^{<\kappa}}\subseteq M$, we have $\theta^\lambda \leq (\theta^\lambda)^M <j(\kappa)$. Thus $j$ witnesses that $\kappa$ is $\theta^\lambda$-tall with closure $\lambda^{<\kappa}$.

By the remarks in the previous paragraph, given that $\kappa$ is $\theta$-tall with closure $\lambda$, in many arguments we will be able to assume without loss of generality that $\theta^\lambda=\theta$ and $\lambda^{<\kappa}=\lambda$. Then by Lemma \ref{generators} there is an embedding $j:V\to M$ witnessing that $\kappa$ is $\theta$-tall with closure $\lambda$ such that $$M=\{j(h)(j"\lambda,\alpha)\mid\textrm{$\alpha\leq\theta$ and $h:P_\kappa\lambda\times\kappa\to V$ is a function in $V$}\}.$$

\subsection{Fast function forcing and tallness with closure}\label{fffsection}

The goal of this section will be to prove that we can force to add a function with the Menas property with respect to $\theta$-tallness with closure $\lambda$. In other words, we will show that if $j:V\to M$ witnesses that $\kappa$ is $\theta$-tall with closure $\lambda$, then we may force to add a partial function $f\subseteq \kappa\times\kappa$ with the property $j(f)(\kappa)>\theta$. In fact, we can arrange any particular value for $j(f)(\kappa)$ up to $j(\kappa)$, the degree of tallness of $\kappa$. To accomplish this we will use a technique invented by Woodin called fast function forcing.

For a cardinal $\kappa$ we define the \emph{fast function forcing} poset $\F_\kappa$ as follows. Conditions in $\F_\kappa$ are partial functions $p\subseteq\kappa\times\kappa$ such that
\begin{enumerate}
\item each $\gamma\in \dom(p)$ is inaccessible and $p"\gamma\subseteq\gamma$,
\item for each inaccessible $\gamma\leq\kappa$ we have $|p\restrict\gamma|<\gamma$.
\end{enumerate}
The ordering on $\F_\kappa$ is given by $p\leq q$ if and only if $p\supseteq q$. For a fixed condition of the form $p:=\{(\gamma,\delta)\}$ we may factor $\F_\kappa$ below $p$ as $\F_\kappa\restrict p \cong \F_{\gamma}\times\F_{[\lambda,\kappa)}$ where $\lambda$ is the next inaccessible beyond $\max(\gamma,\delta)$ and $\F_{[\lambda,\kappa)}:=\{p\in \F_\kappa\mid \dom(p)\subseteq [\lambda,\kappa)\}$. A generic $G$ for $\F_\kappa$ provides a partial function $f:=\bigcup G$ from $\kappa$ to $\kappa$. Since we will only be concerned with the function $f$, and $f$ determines $G$, we will write the forcing extension by the fast-function-forcing poset as $V[f]$ from this point forward. For a more detailed account of fast-function-forcing see \cite{Hamkins:TheLotteryPreparation}.

\begin{lemma}\label{menaslemma}
Suppose $j:V\to M$ is a $\theta$-tallness embedding with closure $\lambda$ with critical point $\kappa$ where $\lambda\leq\theta$ (or merely $\lambda$ is less than the first inaccessible beyond $\theta$). Then there is a fast function forcing extension $V[f]$ such that $j$ lifts to $j:V[f]\to M[j(f)]$ witnessing the $\theta$-tallness with closure $\lambda$ in $V[f]$ such that $j(f)(\kappa)>\theta$. Furthermore, for any $\delta<j(\kappa)$ there is such a lift $j$ such that $j(f)(\kappa)=\delta$.
\end{lemma}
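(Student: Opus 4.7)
The plan is to force with $\F_\kappa$, obtaining generic $f=\bigcup G$, and lift $j$ to $V[f]$ by constructing an $M$-generic filter for $j(\F_\kappa)$ below the opt condition $q:=\{(\kappa,\delta)\}$, which forces $j(f)(\kappa)=\delta$. I will carry out the argument when $\delta>\theta$; the first clause uses any such $\delta$, while the ``furthermore'' clause for arbitrary $\delta<j(\kappa)$ is a mild variant (one opts at an additional high $M$-inaccessible to restore sufficient tail closure).

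First I would invoke the WLOG reductions preceding Lemma \ref{generators} to assume $\theta^\lambda=\theta$ and $\lambda^{<\kappa}=\lambda$, and by Lemma \ref{generators} choose $j:V\to M$ with the seed representation
\[
M=\{j(h)(j"\lambda,\alpha)\mid \alpha\leq\theta,\ h:P_\kappa\lambda\times\kappa\to V,\ h\in V\}.
\]
Fix $\delta$ with $\theta<\delta<j(\kappa)$. Since $\kappa$ is $M$-inaccessible and $q"\kappa=\emptyset$, the condition $q$ belongs to $j(\F_\kappa)$. The factoring lemma from the definition of fast function forcing gives, in $M$,
\[
j(\F_\kappa)\restrict q\;\cong\;\F_\kappa\times\F_{[\mu,j(\kappa))},
\]
where $\mu$ is the next $M$-inaccessible above $\delta$; by choice of $\delta$ we have $\mu>\theta\geq\lambda$.

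Let $G\subseteq\F_\kappa$ be $V$-generic. Since every condition $p\in\F_\kappa$ lies in $V_\kappa$ and $\cp(j)=\kappa$, $j(p)=p$, so $j"G=G$ sits as the $M$-generic for the first factor. The main step is to build an $M[G]$-generic $G_{\textrm{tail}}$ for $\F_{[\mu,j(\kappa))}$ inside $V[G]$. The tail is $<\mu$-closed in $M[G]$, and since $\F_\kappa$ is $\lambda^+$-c.c., Lemma \ref{chainlemma} gives $M[G]^\lambda\subseteq M[G]$ in $V[G]$. To count dense subsets of the tail in $M[G]$, I use the seed representation together with Lemma \ref{seedpreservationlemma}: each such dense subset has an $\F_\kappa$-name in $M$ of the form $j(h)(j"\lambda,\alpha)$, and restricting $h$'s range to $\F_\kappa$-names for dense subsets of appropriate tail approximations, the WLOG arithmetic $\theta^\lambda=\theta$ and $\lambda^{<\kappa}=\lambda$ caps the count at $\theta$, which is strictly below the closure degree $\mu$. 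A length-$\theta$ descending construction in $V[G]$, using the $\lambda$-closure of $M[G]$ in $V[G]$ to assemble intermediate stages coherently, produces $G_{\textrm{tail}}$.

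Finally, let $H$ be the filter on $j(\F_\kappa)$ below $q$ generated by $(G,G_{\textrm{tail}})$ under the factorization. Then $H$ is $M$-generic, $q\in H$, and $j"G\subseteq H$, so by Lemma \ref{liftingcriterion} the embedding lifts to $j:V[G]\to M[H]=M[j(f)]$ in $V[G]$, with $j(f)(\kappa)=\delta$. For the closure of $M[H]$ in $V[G]=V[f]$: any $\lambda$-sequence $\vec{x}\in V[G]$ of elements of $M[H]$ can be coded by the corresponding sequence of $\F_{[\mu,j(\kappa))}$-names $\dot{x}_i\in M[G]$, which lies in $M[G]$ by $\lambda$-closure and then evaluates back into $M[H]$. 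Thus $j$ witnesses $\theta$-tallness with closure $\lambda$ in $V[f]$. The main obstacle throughout is the counting of dense subsets of the tail in $M[G]$; the seed representation of Lemma \ref{generators} combined with the WLOG arithmetic is precisely what makes the bound fall below $\mu$.
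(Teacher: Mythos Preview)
Your approach is essentially the paper's, but the diagonalization step has a genuine gap. You correctly observe that the seed representation bounds the number of dense subsets of the tail in $M[G]$ by $\theta$ (since $2^\lambda\leq\theta^\lambda=\theta$). However, a ``length-$\theta$ descending construction in $V[G]$'' does not go through: the tail $\F_{[\mu,j(\kappa))}$ is $\leq\theta$-closed in $M[G]$, but in $V[G]$ it is only $\leq\lambda$-closed (via $M[G]^\lambda\subseteq M[G]$), and your $\theta$-enumeration of the dense sets lives in $V[G]$, not in $M[G]$. At limit stages of cofinality strictly greater than $\lambda$---and these certainly occur once $\theta>\lambda^+$---you have neither enough closure in $V[G]$ to find a lower bound nor access to the partial descending sequence from inside $M[G]$. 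The phrase ``using the $\lambda$-closure of $M[G]$ in $V[G]$ to assemble intermediate stages coherently'' does not address this; $\lambda$-closure only helps at limits of cofinality $\leq\lambda$.

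The paper repairs this with an extra reduction you omitted: first force $2^\lambda=\lambda^+$ (this is $\leq\lambda$-distributive, hence preserves the embedding by Lemma~\ref{lambdadist}). Then organize the diagonalization by the \emph{representing functions} $h$ rather than by the dense sets themselves. There are only $(2^\kappa)^{\lambda}=2^\lambda=\lambda^+$ many such $h$, so the outer descending sequence has length $\lambda^+$. At a successor stage $\xi$, the $(\theta+1)$-sequence $\langle j(h_\xi)(j"\lambda,\alpha)\mid\alpha\leq\theta\rangle$ lies in $M$ (since $j(h_\xi)$ and $j"\lambda$ do), so one meets all of its dense sets with a single condition using the $\leq\theta$-closure available \emph{inside} $M[G]$. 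At a limit stage $\xi<\lambda^+$, the partial sequence has length $\leq\lambda$, hence lies in $M[G]$, and a lower bound exists there. Your sketch needs exactly this two-level structure together with the $2^\lambda=\lambda^+$ reduction; without it the construction stalls.
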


\begin{proof}
As mentioned at the end of subsection \ref{tallness}, we may assume without loss of generality that $\lambda^{<\kappa}=\lambda$ and $\theta^\lambda=\theta$. We may assume that $2^\lambda=\lambda^+$ since this can be accomplished using $\leq\lambda$-distributive forcing, which preserves the $\theta$-tallness with closure $\lambda$ of $\kappa$ by Lemma \ref{lambdadist}. Let $f$ be $V$-generic for $\F_\kappa$ and let $j:V\to M$ be a $\theta$-tallness embedding with closure $\lambda$ such that $$M=\{j(h)(j"\lambda,\alpha)\mid\textrm{$\alpha\leq\theta$ and $h:P_\kappa\lambda\times\kappa\to V$ is a function in $V$}\}.$$ Let $\delta$ be an ordinal with $\theta<\delta<j(\kappa)$ and let $p:=\{(\kappa,\delta)\}$. We may factor $j(\F_\kappa)\restrict p\cong \F_\kappa\times \F_{[\gamma,j(\kappa))}$ where $\gamma$ is the next inaccessible cardinal above $\delta$.

We would like to build an $M[f]$-generic filter for $\F_{[\gamma,j(\kappa))}$ in $V[f]$. Let $D$ be a dense subset of $\F_{[\gamma,j(\kappa))}$ in $M[f]$. Then $D$ has an $\F_\kappa$ name $\dot{D}\in M$ and $\dot{D}=j(h_{\dot{D}})(j"\lambda,\alpha)$ for some $\alpha\leq\theta$ and $h_{\dot{D}}:P_\kappa\lambda\times\kappa\to V$. Since $j"\lambda,j(h_{\dot{D}})\in M$ it follows that $\vec{D}:=\langle j(h_{\dot{D}})(j"\lambda,\alpha)\mid\alpha\leq\theta\rangle\in M$. Using the $\leq\theta$-closure of $\F_{[\gamma,j(\kappa))}$ in $M[f]$ we may find a single condition in $\F_{[\gamma,j(\kappa))}$ meeting every dense set mentioned by $\vec{D}_f:=\langle j(h_{\dot{D}})(j"\lambda,\alpha)_f\mid\alpha\leq\theta\rangle$.

Now we can assume without loss of generality that $h_{\dot{D}}:P_\kappa\lambda\times\kappa\to\{\textrm{nice names of dense subsets of a tail of $\F_\kappa$}\}$. Since $|\F_\kappa|=\kappa$ it follows that there are $2^\kappa$-many nice names for dense subsets of a tail of $\F_\kappa$. This implies that there are $(2^\kappa)^{\lambda^{<\kappa}}=2^\lambda=\lambda^+$-many functions $h$ with domain $P_\kappa\lambda\times\kappa$ that represent nice names for dense subsets of a tail of $\F_\kappa$. In $V$ we may enumerate such $h$'s as $\langle h_\xi\mid\xi<\lambda^+\rangle$. Since every dense subset of $\F_{[\gamma,j(\kappa))}$ in $M[f]$ has a nice name and each nice name is represented by a function $h_\xi$ on our list, we may build an $M[f]$-generic for $\F_{[\kappa,j(\kappa))}$ in $V[f]$ as follows. At a successor stage $\xi$, by using the $\leq\theta$-closure of $\F_{[\gamma,j(\kappa))}$ in $M[f]$ we find a single condition $p_\xi\in \F_{[\gamma, j(\kappa))}$ below all previously constructed conditions meeting each dense set of the form $j(h_\xi)(j"\lambda,\alpha)_f$ for $\alpha\leq\theta$. At limit stages we use the fact that $\F_{[\kappa,j(\kappa))}$ is $<\lambda^+$-closed in $V[f]$ to find a condition below all previously constructed conditions. This defines a descending sequence of conditions in $V[f]$ and we let $f_{[\gamma,j(\kappa))}$ be the $M[f]$-generic filter for $\F_{[\gamma,j(\kappa))}$ generated by the sequence. Since $j"f\subseteq f\cup p\cup f_{[\gamma,j(\kappa))}$ we may lift $j$ to $j:V[f]\to M[j(f)]$ where $j(f)=f\cup p \cup f_{[\gamma,j(\kappa))}$ and $j$ is a class of $V[f]$. Since $F_\kappa$ is $\kappa^+$-c.c. and $f_{[\gamma,j(\kappa))}$ is in $V[f]$ it follows by Lemmas \ref{groundlemma} and \ref{chainlemma} that $M[j(f)]$ is closed under $\lambda$-sequences in $V[f]$ and hence that the lifted embedding witnesses that $\kappa$ is $\theta$-tall with closure $\lambda$ in $V[f]$.

\end{proof}

\subsection{The lottery preparation and tallness with closure}

In \cite{Hamkins:TheLotteryPreparation}, Hamkins shows that the lottery preparation makes many large cardinals indestructible by a wide array of forcing notions. Here we will extend the results in \cite{Hamkins:TheLotteryPreparation} to include $\theta$-tallness with closure $\lambda$.


\begin{theorem}\label{indestructibility}
Suppose $\kappa$ is $\theta$-tall with closure $\lambda$ where $\lambda\leq\theta$. Then after the lottery preparation, the $\theta$-tallness with closure $\lambda$ is indestructible by $<\kappa$-directed closed forcing of size $\leq\lambda$ and $\lambda$ is preserved.
\end{theorem}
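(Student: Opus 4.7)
The plan is to adapt Hamkins' lottery-preparation indestructibility argument from \cite{Hamkins:TheLotteryPreparation} to the present context. By Lemma \ref{lambdadist}, I may assume $\lambda^{<\kappa}=\lambda$, $\theta^\lambda=\theta$, and $2^\lambda=\lambda^+$, since the forcings achieving these are $\leq\lambda$-distributive. Following the paper's convention, I take the preparation to be $\F_\kappa*\dot\P$, where $\F_\kappa$ is fast function forcing (with generic $f$) and $\P$ is the lottery preparation of $\kappa$ relative to $f$. Let $G$ be $V[f]$-generic for $\P$, let $\Q\in V[f][G]$ be any $<\kappa$-directed closed poset with $|\Q|\leq\lambda$, and let $H$ be $V[f][G]$-generic for $\Q$. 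By Lemma \ref{menaslemma} and Lemma \ref{generators} there is a lifted embedding $j:V[f]\to M[j(f)]$ witnessing that $\kappa$ is $\theta$-tall with closure $\lambda$ in $V[f]$, in which $j(f)(\kappa)$ may be prescribed to be any ordinal below $j(\kappa)$; I pick it well above $\theta$, large enough that the tail forcing $\P_{tail}$ defined below will be $\leq\lambda^+$-closed. Moreover we may assume $M=\{j(h)(j"\lambda,\alpha)\mid\alpha\leq\theta,\, h:P_\kappa\lambda\times\kappa\to V,\, h\in V\}$. The goal is to lift $j$ further to $V[f][G][H]$ with target still closed under $\lambda$-sequences.

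I next factor $j(\P)$ and opt for $\Q$ at stage $\kappa$. By elementarity $j(\P)$ is the lottery preparation of $j(\kappa)$ relative to $j(f)$, and since $f=j(f)\restrict\kappa$ the first $\kappa$ stages of $j(\P)$ coincide with $\P$. At stage $\kappa$ of $j(\P)$, in $M[j(f)][G]$, the forcing is the lottery sum of all $<\kappa$-strategically closed posets in $H(j(f)(\kappa)^+)^{M[j(f)][G]}$. Because $M[j(f)][G]$ is closed under $\lambda$-sequences in $V[f][G]$ (by Lemma \ref{chainlemma} plus the $\kappa^+$-c.c.\ of $\F_\kappa*\dot\P$) and $|\Q|\leq\lambda<j(f)(\kappa)$, the poset $\Q$ lies in that set and appears as an option in the stage-$\kappa$ lottery. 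Below a condition $p$ opting for $\Q$, we obtain $j(\P)\restrict p\cong \P*\dot\Q*\dot\P_{tail}$, and since $j(f)(\kappa)>\theta$ the next nontrivial stage of $j(\P)$ past $\kappa$ occurs well beyond $\theta$, so $\P_{tail}$ is $\leq\lambda^+$-closed in $M[j(f)][G][H]$.

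To complete the lift I would, inside $V[f][G][H]$, construct successively an $M[j(f)][G][H]$-generic $G_{tail}$ for $\P_{tail}$ and then, below a master condition, an $M[j(f)][j(G)]$-generic $j(H)$ for $j(\Q)$ containing $j"H$, where $j(G):=G*H*G_{tail}$. The standard counting argument: every dense set in the relevant target has the form $j(h)(j"\lambda,\alpha)$ for some $h:P_\kappa\lambda\times\kappa\to V$ in $V$ and $\alpha\leq\theta$, and the number of such $h$ in $V$ is at most $(2^\kappa)^{\lambda^{<\kappa}}=2^\lambda=\lambda^+$; using the closure of $\P_{tail}$ to meet each ``name-family'' $\{j(h)(j"\lambda,\alpha)\mid\alpha\leq\theta\}$ by a single condition, a $\lambda^+$-step diagonalization produces $G_{tail}$. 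For the lift through $\Q$, the key is that $j(\Q)$ is $<j(\kappa)$-directed closed in the target model and $j"H$ is a directed subset of $j(\Q)$ of size $\leq\lambda<j(\kappa)$, so it has a lower bound $q^*$; below $q^*$ the same representing-function diagonalization supplies $j(H)$. Lemmas \ref{groundlemma} and \ref{chainlemma} then yield that $M[j(f)][j(G)][j(H)]$ is closed under $\lambda$-sequences in $V[f][G][H]$, witnessing $\theta$-tallness with closure $\lambda$. Preservation of $\lambda$ is a routine consequence of the $\kappa^+$-c.c.\ of $\F_\kappa*\dot\P$, the $<\kappa$-closure of $\Q$, and the bound $|\Q|\leq\lambda$.

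The main technical obstacle I anticipate is the calibration between the closure of $\P_{tail}$ (and of the lower part of $j(\Q)$) on one hand, and the $\lambda^+$ many name-families to be met on the other; this is precisely why the freedom to enlarge $j(f)(\kappa)$ afforded by Lemma \ref{menaslemma} is essential, and why the hypothesis $\lambda\leq\theta$ (which guarantees that $M$ really is generated by $j"\lambda$ together with ordinals $\leq\theta$) enters cleanly. The directed-closure assumption on $\Q$, as opposed to mere closure, is also crucial: it is precisely what permits replacing the directed, but generally unlinearly ordered, set $j"H\subseteq j(\Q)$ by a single master condition $q^*$.
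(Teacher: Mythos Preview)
Your proposal is correct and follows essentially the same argument as the paper's proof: factor $j(\P)$ below a condition opting for $\Q$ at stage $\kappa$, build $G_{tail}$ by a $\lambda^+$-step diagonalization over representing functions $h:P_\kappa\lambda\times\kappa\to V$, then lift through $\Q$ using a master condition below $j"H$ and a second diagonalization. One small slip: you state that $\P_{tail}$ is $\leq\lambda^+$-closed, but what you actually obtain (and what you implicitly use when meeting each $\theta$-indexed family $\{j(h)(j"\lambda,\alpha)\mid\alpha\leq\theta\}$ with a single condition) is that $\P_{tail}$ is $\leq\theta$-closed in $M[j(f)][G][H]$, which is the closure degree the paper invokes.
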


\begin{proof}
Suppose $j:V\to M$ witnesses the $\theta$-tallness with closure $\lambda$ of $\kappa$. Without loss of generality we may assume that $\lambda^{<\kappa}=\lambda$, $\theta^\lambda=\theta$, $2^\lambda=\lambda^+$, and $$M=\{j(h)(j"\lambda,\alpha)\mid\textrm{$\alpha\leq\theta$ and $h:P_\kappa\lambda\times\kappa\to V$ is a function in $V$}\}.$$
We remark that the forcing to obtain $2^\lambda=\lambda^+$ collapses cardinals in the interval $[\lambda^+,2^\lambda]$ to $\lambda^+$. By Lemma \ref{menaslemma} we may assume there is a fast function $f\subseteq\kappa\times\kappa$ with $j(f)(\kappa)>\theta$. Let $\P$ be the lottery preparation defined relative to $f$ and let $G$ be $V$-generic for $\P$. Let $\Q$ be any $<\kappa$-directed closed forcing of size $\leq\lambda$ in $V[G]$ and let $H$ be $V[G]$-generic for $\Q$.

Since $\Q$ could be trivial forcing it will suffice to lift $j$ to $V[G][H]$ in $V[G][H]$. We assume without loss of generality that $\Q\subseteq \ORD$. Since $|\P|^V=\kappa$ it follows that $M[G]^\lambda\subseteq M[G]$ in $V[G]$ and hence $\Q\in M[G]$. By elementarity, $j(\P)$ is the lottery preparation of $j(\kappa)$ with respect to $j(f)$. Since $M$ is closed under $\lambda$-sequences in $V$ it follows that the first $\kappa$ stages in $\P$ and $j(\P)$ are the same. Since $\lambda\leq\theta$ we have $\Q\in H(j(f)(\kappa)^+)$ and thus $\Q$ appears in the lottery sum at stage $\kappa$ in $j(\P)$. Thus we may factor $j(\P)$ below a condition $p$ that opts for $\Q$ at stage $\kappa$ as $j(\P)\restrict p\cong \P*\Q*\P_{tail}$, where $\P_{tail}$ is a term for the iteration beyond stage $\kappa$. We know that $\P_{tail}$ is a term for $\leq\theta$-closed forcing because $j(f)(\kappa)>\theta$. Since $|\Q|^{V[G]}\leq\lambda$, it follows that $M[G][H]^\lambda\subseteq M[G][H]$ in $V[G][H]$. We will now construct an $M[G][H]$-generic for $\P_{tail}$ in $V[G][H]$. Let $D$ be a dense subset of $\P_{tail}$ in $M[G][H]$. Let $\dot{D}\in M$ be a $\P*\Q$-name for $D$, that is $\dot{D}_{G*H}=D$, and let $\dot{D}=j(h_{\dot{D}})(j"\lambda,\alpha)$ where $h_{\dot{D}}:P_\kappa\lambda\times\kappa\to V$, $h_{\dot{D}}\in V$, and $\alpha\leq\theta$. Since $j(h_{\dot{D}}),j"\lambda\in M$ the sequence of names $\vec{D}:=\langle j(h_{\dot{D}})(j"\lambda,\alpha)\mid\alpha\leq\theta\rangle$ is in $M$, and furthermore the sequence of dense subsets of $\P_{tail}$, $\vec{D}_{G*H}:=\langle j(h_{\dot{D}})(j"\lambda,\alpha)_{G*H}\mid\alpha\leq\theta\rangle$, is in $M[G][H]$. Since $\P_{tail}$ is $\leq\theta$-closed in $M[G][H]$ we can find a single condition below every dense set mentioned by $\vec{D}_{G*H}$. Without loss of generality we may assume that the range of $h_{\dot{D}}$ is contained in the set of nice names for dense subsets of a tail of $\P$. Now working in $V[G][H]$ we put a bound on the number of functions $$h:P_\kappa\lambda\times\kappa\to \{\textrm{nice names for dense subsets of a tail of $\P$}\}.$$ Since $|\P|=\kappa$ there are $2^\kappa$-many nice names for subsets of a tail of $\P$. Thus there are at most $(2^\kappa)^{\lambda^{<\kappa}}=2^\lambda=\lambda^+$-many such $h$'s. 
In $V[G][H]$ we may enumerate all such $h$'s as $\langle h_\xi\mid\xi<\lambda^+\rangle.$ Since every dense subset of $\P_{tail}$ in $M[G][H]$ has a nice $\P*\Q$-name and each nice $\P*\Q$-name is represented by one of the $h_\xi$'s on our list, we may construct an $M[G][H]$-generic descending sequence of conditions of $\P_{tail}$ as follows. At successor stages $\xi$, we work in $M[G][H]$ and use the fact that $\P_{tail}$ is $\leq\theta$-closed in $M[G][H]$ to find a condition of $\P_{tail}$ meeting every dense subset of $\P_{tail}$ which has a name on the list $\langle j(h_\xi)(j"\lambda,\alpha)\mid\alpha\leq\theta\rangle$. At limits $\xi<\lambda^+$, since $M[G][H]$ is closed under $\lambda$-sequences in $V[G][H]$ it follows that $\P_{tail}$ is $\leq\lambda$-closed in $V[G][H]$, and hence, in $V[G][H]$, we may find a condition of $\P_{tail}$ below all previously constructed conditions. This defines a descending $\lambda^+$-sequence of conditions in $\P_{tail}$ and we let $G_{tail}$ be the filter generated by this sequence of conditions. Clearly $G_{tail}\in V[G][H]$ is an $M[G][H]$-generic filter for $\P_{tail}$. Thus we may lift the embedding to $j:V[G]\to M[j(G)]$ where $j(G):=G*H*G_{tail}$ and $j$ is a class of $V[G][H]$. Since $\P*\Q$ is $\lambda^+$-c.c. it follows from Lemma \ref{chainlemma} that $M[G][H]^\lambda\subseteq M[G][H]$ in $V[G][H]$. Furthermore, since $G_{tail}\in V[G][H]$, we see by Lemma \ref{groundlemma} that $M[j(G)]$ is closed under $\lambda$-seqences in $V[G][H]$.

We will now lift $j$ to $V[G][H]$. Since $H,j"\Q\in M[j(G)]$ we may build $j"H$ in $M[j(G)]$. Since $j(\Q)$ is $<j(\kappa)$-directed closed in $M[j(G)]$ it follows that in $M[j(G)]$, there is a master condition $r\in j(\Q)$ below each element of $j"H$. We will now construct an $M[j(G)]$-generic filter for $j(\Q)$ in $V[G][H]$. Let $D\in M[j(G)]$ be a dense subset of $j(\Q)$. Then by Lemma \ref{seedpreservationlemma} we may write $D=j(h_D)(j"\lambda,\alpha)$ where $h_D\in V[G]$ is a function from $P_\kappa\lambda\times\kappa$ to the collection of dense subsets of $\Q$ and $\alpha\leq\theta$. Now let $\vec{D}:=\langle j(h_D)(j"\lambda,\alpha)\mid\alpha\leq\theta\rangle$. Since $j"\lambda,j(h_D)\in M[j(G)]$ we see that $\vec{D}\in M[j(G)]$. Since $j(\Q)$ is $<j(\kappa)$-directed closed in $M[j(G)]$ we can find, via an internal argument in $M[j(G)]$, a single condition that meets every dense set mentioned by $\vec{D}$. In $V[G]$, $|\Q|=\lambda$ and this implies that there are at most $(2^\lambda)^{\lambda^{<\kappa}}=\lambda^+$-many functions $h\in V[G]$ that represent dense subsets of $j(\Q)$ in $M[j(G)]$. As before, we enumerate these functions as $\langle h_\xi\mid\xi<\lambda^+\rangle$ and define a descending sequence of conditions meeting every dense subset of $j(\Q)$. We start the descending sequence with the master condition, $r$. If $\xi$ is a successor, we use the $<j(\kappa)$-directed closure of $j(\Q)$ in $M[j(G)]$ to meet all dense sets mentioned in $\langle j(h_\xi)(j"\lambda,\alpha)\mid\alpha\leq\theta\rangle$ with a single condition that is also below $r$. At limit stages $\xi<\lambda^+$ since $M[j(G)]$ is closed under $\lambda$-sequences in $V[G][H]$ it follows that $j(\Q)$ is $\leq\lambda$-closed in $V[G][H]$, and hence we may find a condition of $j(\Q)$ below all previously constructed conditions. This defines a descending $\lambda^+$-sequence of conditions below the master condition $r$. Let $j(H)$ be the generic filter generated by this sequence. Since $r$ is stronger than every element of $j"H$ and $r\in j(H)$ we have $j"H\subseteq j(H)$ and thus we may lift the embedding to $j:V[G][H]\to M[j(G)][j(H)]$ where the lifted embedding is a class in $V[G][H]$.

This shows that the $\theta$-tallness with closure $\lambda$ of $\kappa$ is indestructible by any $<\kappa$-directed closed forcing of size $\leq\lambda$ in $V[G]$.

\end{proof}

Let us now give a quick application of Theorem \ref{indestructibility}.

\begin{corollary}\label{GCHoninterval}
If $\kappa$ is $\theta$-tall with closure $\lambda$ where $\lambda\leq\theta$ and $\lambda$ is inaccessible then there is a forcing extension preserving the inaccessibility of $\lambda$ in which $\kappa$ is $\theta$-tall with closure $\lambda$ and $\GCH$ holds on $[\kappa,\lambda]$.
\end{corollary}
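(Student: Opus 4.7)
The plan is to reduce to Theorem \ref{indestructibility} by constructing a $<\kappa$-directed closed forcing of size $\leq\lambda$ that achieves $\GCH$ on $[\kappa,\lambda]$ after the lottery preparation. First, by Lemma \ref{lambdadist}, I will arrange $2^\lambda=\lambda^+$ with $\leq\lambda$-distributive forcing, preserving both the $\theta$-tallness with closure $\lambda$ of $\kappa$ and the inaccessibility of $\lambda$. Next, by Lemma \ref{menaslemma}, add a fast function $f\subseteq\kappa\times\kappa$ with $j(f)(\kappa)>\theta$, and let $G$ be $V$-generic for the lottery preparation $\P$ of $\kappa$ relative to $f$. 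Since $|\P|=\kappa$, both $(2^\lambda)^{V[G]}=\lambda^+$ and the inaccessibility of $\lambda$ persist in $V[G]$ (assuming $\lambda>\kappa$; the case $\lambda=\kappa$ is easier and handled analogously).

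Working in $V[G]$, I would define a reverse Easton iteration $\mathbb{R}$ of length $\lambda$ whose stage $\mu$ forcing for $\mu\in[\kappa,\lambda)$ is the collapse $\textrm{Coll}(\mu^+,2^\mu)$ by partial functions of size $\leq\mu$, with trivial forcing at other stages. Each stage forcing is $<\mu^+$-closed, so the iteration $\mathbb{R}$ is $<\kappa$-directed closed. An inductive argument using that $\lambda$ is a strong limit shows that each $|\dot{\Q}_\mu|<\lambda$ in the corresponding intermediate extension; taking direct limits at inaccessible stages then yields $|\mathbb{R}|\leq\lambda$. Hence Theorem \ref{indestructibility} applies: letting $H$ be $V[G]$-generic for $\mathbb{R}$, $\kappa$ remains $\theta$-tall with closure $\lambda$ in $V[G][H]$.

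It remains to verify $\GCH$ on $[\kappa,\lambda]$ and the inaccessibility of $\lambda$ in $V[G][H]$. For $\mu\in[\kappa,\lambda)$, the stage $\mu$ forcing achieves $2^\mu=\mu^+$, and subsequent stages are $<\mu^+$-closed and so preserve this. For $\mu=\lambda$, a nice-name count using $|\mathbb{R}|=\lambda$ together with $(2^\lambda)^{V[G]}=\lambda^+$ gives $(2^\lambda)^{V[G][H]}=\lambda^+$. Then $\lambda$ is strong limit in $V[G][H]$ by the $\GCH$ just established, and it remains regular because a reverse Easton iteration of length an inaccessible $\lambda$ whose stages have size $<\lambda$ does not change $\cf(\lambda)$. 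The main obstacle is the reverse-Easton bookkeeping required to control $|\mathbb{R}|\leq\lambda$ and to verify preservation of inaccessibility of $\lambda$, but both are standard calculations in this framework.
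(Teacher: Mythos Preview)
Your proposal is correct and follows essentially the same approach as the paper: invoke Theorem~\ref{indestructibility} and then force $\GCH$ on $[\kappa,\lambda]$ with a reverse Easton iteration that is $<\kappa$-directed closed of size $\leq\lambda$. The only cosmetic differences are that the paper uses $\Add(\gamma^+,1)$ at cardinal stages $\gamma$ rather than your $\mathrm{Coll}(\mu^+,2^\mu)$, and the paper arranges $\GCH$ at $\lambda$ at the end via $\Add(\lambda^+,1)$ (citing Lemma~\ref{lambdadist}) rather than at the beginning as you do; neither change affects the argument.
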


\begin{proof}
By Theorem \ref{indestructibility} we may assume that the $\theta$-tallness with closure $\lambda$ is indestructible by $<\kappa$-directed closed forcing of size $\leq\lambda$. We define a length $\lambda$ forcing iteration $\P_\lambda$ with reverse Easton support as follows. Let the first $\kappa$ stages of $\P_{\lambda}$ be trivial forcing. For $\kappa\leq\gamma<\lambda$ we force at cardinal stages $\gamma$ with $\Add(\gamma^+,1)^{V_{\P_\gamma}}$. Clearly $\P_\lambda$ is $\leq\kappa$-directed closed and thus $\leq\kappa$-distributive. Let $G$ be $V$-generic for $\P$. 

It is routine to show that $\lambda$ remains inaccessible in $V[G]$ and is thus not collapsed by the forcing $G$.


Since $\lambda$ is inaccessible it follows that $\P$ has size at most $2^{<\lambda}\leq\lambda$. Furthermore, $\P$ is $<\kappa$-directed closed.
Hence by Theorem \ref{indestructibility} it follows that in $V[G]$, $\kappa$ is $\theta$-tall with closure $\lambda$ and $\GCH$ holds on $[\kappa,\lambda)$. Now we may force $\GCH$ to hold at $\lambda$ with $\leq\lambda$-distributive forcing $\Q=\Add(\lambda^+,1)$, which clearly preserves the $\theta$-tallness with closure $\lambda$ of $\kappa$ by Lemma \ref{lambdadist}.

\end{proof}

\section{Proof of Main Theorem (2) - (4)}\label{mainproofsection}

Let us now argue that the equiconsistencies in the forward directions in Main Theorem (2) - (4) are actually implications. For Main Theorem (2), suppose $j:V\to M$ witnesses that $\kappa$ is $\lambda$-supercompact and $2^\kappa\geq\theta$. Since $j(\kappa)$ is inaccessible in $M$ we have $\theta\leq2^\kappa\leq (2^\kappa)^M<j(\kappa)$. Hence $j$ is a $\theta$-tallness embedding with closure $\lambda$. The forward directions in Main Theorem (3) and (4) are similar.

It remains to prove the backward directions of Main Theorem (2) - (4). To do this we will start with an embedding $j:V\to M$ witnessing the $\theta$-tallness with closure $\lambda$ of $\kappa$, force to violate $\GCH$ at either $\kappa$ or $\lambda$, and then lift the embedding to the forcing extension. In order to lift the embedding we will use Woodin's method of surgery to modify a certain generic $g$ to obtain $g^*$ with the pullback property $j"H\subseteq g^*$. The following lemma, due to Woodin, will allow us to show that $g^*$ is a generic filter.

\begin{keylemma}\label{surgery}
Suppose $N$ and $M$ are transitive inner models of $\ZFC$ and $j:N\to M$ is a nontrivial elementary embedding with critical point $\kappa$ that is continuous at regular cardinals $\geq\lambda^+$ where $\lambda\geq\kappa$. Then if $A\in M$ is such that $|A|^M\leq j(\lambda)$ then $|A\cap \ran(j)|^V\leq\lambda$.
\end{keylemma}

\begin{proof}
Let $j:N\to M$ and $A\in M$ be as above; that is, $|A|\leq j(\lambda)$. 

First we will argue that it suffices to consider the case where $A$ is a set of ordinals. Let $\vec{B}:=\langle b_\alpha\mid\alpha<\beta\rangle\in N$ be a sequence of length $\beta$ such that $A\subseteq \ran(j(\vec{B}))$; for example, $\vec{B}$ could be an enumeration of some sufficiently large $V_\theta^N$ so that $j(\vec{B})$ is an enumeration of $V_{j(\theta)}^M$. Clearly $j(\vec{B})$ is a sequence of length $j(\beta)$ in $M$, write $j(\vec{B})=\langle b'_\alpha\mid\alpha<j(\beta)\rangle$. Let $A_0=\{\alpha<j(\beta)\mid b'_\alpha\in A\}$. Then $A_0\in M$ and we have $|A_0|^M=|A|^M$. Clearly $b'_\alpha\in\ran(j)$ if and only if for some $\xi<\beta$ we have $b'_\alpha=j(b_\xi)=b'_{j(\xi)}$. In other words, $b'_\alpha\in \ran(j)$ if and only if $\alpha\in\ran(j)$. It follows that $|A\cap\ran(j)|^V=|A_0\cap\ran(j)|^V$, hence we have reduced to the case in which $A$ is a set of ordinals.

Suppose $A\in M$ is a set of ordinals with $|A|^M\leq j(\lambda)$ and $|A\cap\ran(j)|^V\geq\lambda^+$. Then $A$ contains $\lambda^+$-many elements of the form $j(\alpha)$ for $\alpha\in N$. That is, we may assume $A$ contains elements of the form $j(\beta_\alpha)$ where $\alpha<\lambda^+$ and $\langle\beta_\alpha\mid\alpha<\lambda^+\rangle\in V$ is a strictly increasing sequence of ordinals which is not necessarily in $N$ since it was defined using $A\in M$. Now let $\delta=\sup\langle\beta_\alpha\mid\alpha<\lambda^+\rangle$. Furthermore, we know that $\cf(\delta)^V=\lambda^+$ and hence $\cf(\delta)^N\geq\lambda^+$. This implies that $\cf(j(\delta))^M\geq j(\lambda^+)$. Since $j$ is continuous at regular cardinals $\geq\lambda^+$, and thus at $\cf(\delta)^N$, we know that $A$ contains unboundedly many $j(\beta_\alpha)$ less than $j(\delta)$. So in $M$, $A$ is unbounded in $j(\delta)$, but this implies that $|A|^M\geq j(\lambda^+)$ which contradicts our assumption that $|A|^M\leq j(\lambda)$.

\end{proof}


The following theorem suffices to finish the proof of Main Theorem (2) - (4).

\begin{theorem}\label{alllambda}
For any cardinals $\kappa\leq\lambda\leq\theta$, if $\kappa$ is $\lambda$-supercompact and $\theta$-tall then there is a forcing extension in which $\kappa$ is $\lambda$-supercompact and $2^\kappa\geq\theta$; and hence also $2^\lambda\geq\theta$. Indeed, the forcing preserves cardinals on $[\kappa,\lambda^+]\cup (2^\lambda,\infty)$ and assuming $\GCH$ holds at $\lambda$, all cardinals $\geq\kappa$ are preserved.
\end{theorem}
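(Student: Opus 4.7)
My plan is to use the lottery preparation together with $\Add(\kappa,\theta)$, then lift the embedding through the Cohen part by Woodin's method of surgery, invoking the Key Lemma at the critical step. Let $j:V\to M$ witness that $\kappa$ is $\theta$-tall with closure $\lambda$ (which is available since $\kappa$ is $\lambda$-supercompact and $\theta$-tall, see Section \ref{tallness}), and reduce as in Section \ref{preliminariessection} to the case $\lambda^{<\kappa}=\lambda$, $\theta^\lambda=\theta$, $2^\lambda=\lambda^+$, and
$$M=\{j(h)(j"\lambda,\alpha)\mid \alpha\leq\theta,\ h:P_\kappa\lambda\times\kappa\to V,\ h\in V\}.$$
By Lemma \ref{menaslemma} we may further assume a fast function $f\subseteq\kappa\times\kappa$ with $j(f)(\kappa)>\theta$. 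Let $\P$ be the lottery preparation of $\kappa$ relative to $f$, let $G$ be $V$-generic for $\P$, and let $g$ be $V[G]$-generic for $\Q:=\Add(\kappa,\theta)^{V[G]}$. The goal is to exhibit in $V[G][g]$ a lift of $j$ witnessing that $\kappa$ is $\lambda$-supercompact, with $2^\kappa\geq\theta$ automatic from $g$.

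Lifting $j$ through $\P$ proceeds exactly as in Theorem \ref{indestructibility}. Since $|\Q|^{V[G]}\leq\theta<j(f)(\kappa)$ and $\Q\in M[G]$ (the latter by $\lambda$-closure of $M[G]$ in $V[G]$), $\Q$ lies in $H(j(f)(\kappa)^+)^{M[G]}$ and appears in the stage-$\kappa$ lottery sum of $j(\P)$. Opting for $\Q$ factors $j(\P)\restriction p\cong \P*\Q*\P_{tail}$, with $\P_{tail}$ a term for $\leq\theta$-closed forcing. Enumerating in $V[G][g]$ the $\lambda^+$ many functions $h:P_\kappa\lambda\times\kappa\to V$ representing nice names for dense subsets of a tail of $\P$ and meeting $\theta$-many dense sets per successor stage via the $\leq\theta$-closure of $\P_{tail}$ in $M[G][g]$, one builds an $M[G][g]$-generic filter $G_{tail}$ for $\P_{tail}$. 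Setting $j(G):=G*g*G_{tail}$ produces a lift $j:V[G]\to M[j(G)]$ in $V[G][g]$, and Lemmas \ref{groundlemma} and \ref{chainlemma} give $M[j(G)]^\lambda\subseteq M[j(G)]$ in $V[G][g]$.

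The heart of the argument is lifting $j$ through $\Q$ by surgery. I would first produce, in $V[G][g]$, an $M[j(G)]$-generic filter $\tilde g$ for $j(\Q)=\Add(j(\kappa),j(\theta))^{M[j(G)]}$ by the standard Woodin construction, which uses the seed representation to present $j(\Q)$ as a forcing of size $\theta$ externally and then exploits the $<j(\kappa)$-directed closure of $j(\Q)$ in $M[j(G)]$. Then define $g^*$ to agree with $\bigcup j"g$ on the ``image coordinates'' $\{(j(\alpha),\beta)\mid \alpha<\theta,\ \beta<\kappa\}\subseteq \ran(j)$ and with $\tilde g$ elsewhere; by construction $j"g\subseteq g^*$. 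The critical verification is that $g^*$ remains $M[j(G)]$-generic: given a maximal antichain $A\in M[j(G)]$ in $j(\Q)$, the $j(\kappa)^+$-c.c.\ bounds $|\bigcup_{p\in A}\dom(p)|^{M[j(G)]}\leq j(\kappa)\leq j(\lambda)$, and the Key Lemma (applicable since $j$ is continuous at regular cardinals $\geq\lambda^+$ by $\lambda$-closure of $M[j(G)]$) then bounds the number of image coordinates appearing in $\bigcup_{p\in A}\dom(p)$ by $\lambda$ in $V[G][g]$. This $\lambda$-bound is precisely tight enough to adjust the witness of $\tilde g$ for $A$ on the small image part of its domain into a condition of $A$ compatible with the switched bits of $g^*$. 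Setting $j(g):=g^*$ yields the desired lift $j:V[G][g]\to M[j(G)][j(g)]$ in $V[G][g]$, witnessing $\lambda$-supercompactness in the extension.

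The main obstacle, as expected, is the surgical verification that $g^*$ meets every antichain $A\in M[j(G)]$ of $j(\Q)$; the Key Lemma provides the essential count of image coordinates, but one must carefully track how the at-most-$\lambda$ image coordinates interact with the domains of individual conditions in $A$ and use maximality of $A$ together with $\bigcup j"g$ to locate a suitable member of $A$. For cardinal preservation, the preparatory forcing used to secure $2^\lambda=\lambda^+$ collapses cardinals in $(\lambda^+,2^\lambda]$ to $\lambda^+$, while the subsequent $\P*\Q$ is $\kappa^+$-c.c.\ of size at most $\theta$, hence preserves all cardinals $\geq\kappa^+$ (and also $\kappa$ itself), so only cardinals in $(\lambda^+,2^\lambda]$ are disturbed; under $\GCH$ at $\lambda$ this interval is empty and all cardinals $\geq\kappa$ are preserved.
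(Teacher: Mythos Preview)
Your outline tracks the paper through the setup and the lift of $j$ through $\P$, but there is a genuine gap at the surgery step: you assert that an $M[j(G)]$-generic $\tilde g$ for $j(\Q)$ can be built \emph{in $V[G][g]$} by diagonalization, and this is exactly what fails. The internal $\leq\theta$-closure of $j(\Q)$ in $M[j(G)]$ lets you meet, in one step, the $\theta$-many dense sets $\{j(h)(j"\lambda,\alpha):\alpha\leq\theta\}$ coming from a fixed representing function $h$. But the number of such functions $h:P_\kappa\lambda\times\kappa\to\{\text{dense subsets of }\Q\}$ is $(2^\theta)^\lambda=2^\theta$, not $\lambda^+$, because $|\Q|=\theta$ (contrast this with the $\P_{tail}$ argument, where $|\P|=\kappa$ gave only $2^\kappa\leq\lambda^+$ many). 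Since $M[j(G)]$ is only closed under $\lambda$-sequences in $V[G][g]$, the poset $j(\Q)$ is only $\leq\lambda$-closed there, so you cannot carry out a $2^\theta$-length descending construction. No generic for $j(\Q)$ exists in $V[G][g]$.

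The paper resolves this by introducing a factor diagram: one passes to the ultrapower $j_0:V[G]\to M_0[j_0(G)]$ by the single seed $(j"\lambda,\theta)$, with factor map $k:M_0[j_0(G)]\to M[j(G)]$, and then \emph{actually forces} with $j_0(\Q)$ over $V[G][H]$ to add a generic $g_0$. One shows $k"g_0$ generates an $M[j(G)]$-generic $g$ for $j(\Q)$, performs surgery on $g$ to obtain $g^*$ with $j"H\subseteq g^*$, and lifts $j$ to $V[G][H]$ as a class of $V[G][H][g_0]$. Because the lifted embedding now lives in the larger model, one must lift once more through $j_0(\Q)$ (this uses Lemma \ref{lambdadist} and the $\leq\lambda$-distributivity of $j_0(\Q)$), and one must verify that $j_0(\Q)$ preserves cardinals and $2^\kappa\geq\theta$ over $V[G][H]$; the paper does this by proving $j_0(\Q)$ is $\leq\lambda$-distributive and $\lambda^{++}$-c.c. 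Your cardinal-preservation paragraph omits this entire $j_0(\Q)$ analysis, which is where the real work lies.
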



In the following proof of Theorem \ref{alllambda} we will use Woodin's method of surgery referred to just before the above key lemma.

\begin{proof}[Proof of Theorem \ref{alllambda}]\

\par

\subsection{Setup}

Let $\kappa$ be $\lambda$-supercompact and $\theta$-tall. As before, by the remarks at the end of subsection \ref{tallness}, we may assume without loss of generality that $\lambda^{<\kappa}=\lambda$ and $\theta^\lambda=\theta$. We may further assume that $2^\lambda=\lambda^+$ since the forcing to achieve this is $\leq\lambda$-distributive and thus preserves the $\lambda$-supercompactness and $\theta$-tallness of $\kappa$. By Lemma \ref{generators} we have an elementary embedding $j:V\to M$ with $\cp(j)=\kappa$, $j(\kappa)>\theta$, $M^\lambda\subseteq M$, and 
$$M=\{j(h)(j"\lambda,\alpha)\mid\alpha\leq\theta \textrm{ and } h:P_\kappa\lambda\times\kappa\to V\textrm{ is a function in } V\}.$$ 
By Lemma \ref{menaslemma} we may assume without loss of generality that there is a partial function $f\subseteq\kappa\times\kappa$ in $V$ such that $j(f)(\kappa)>\theta$. Let $\P$ be the lottery preparation relative to $f$. Let $G\subseteq\P$ be $V$-generic and let $\Q=\Add(\kappa,\theta)^{V[G]}$. Let $H\subseteq \Q$ be $V[G]$-generic. Notice that since $\P$ has size $\kappa$ and $\Q$ is $\kappa^+$-c.c. it follows that $\P*\Q$ is $\kappa^+$-c.c., and thus by Lemma \ref{chainlemma} that $M[G][H]$ is closed under $\lambda$-sequences in $V[G][H]$.

\subsection{Lifting $j$ through the lottery preparation}

By elementarity $j(\P)$ is the lottery preparation of length $j(\kappa)$ relative to $j(f)$ as defined in $M$. Since $M$ is closed under $\lambda$-sequences in $V$ it follows that the iterations $\P$ and $j(\P)$ agree up to stage $\kappa$ and since $\Q\in M[G]$ is $<\kappa$-closed it appears in the stage $\kappa$ lottery in $j(\P)$. Hence we may factor $j(\P)$ below a condition $p\in j(\P)$ that opts for $\Q$ at stage $\kappa$ as $j(\P)\restrict p\cong \P*\Q*\P_{tail}$. Since $j(f)(\kappa)>\theta$ it follows that the next nontrivial stage of forcing in $j(\P)$ is beyond $\theta$ and hence that $\P_{tail}$ is a term for $\leq\theta$-closed forcing. As in the proof of Theorem \ref{indestructibility} we will construct a descending $\lambda^+$-sequence of conditions in $V[G][H]$ that meets every dense subset of $\P_{tail}$ in $M[G][H]$. Let $D$ be a dense subset of $\P_{tail}$ in $M[G][H]$ and let $\dot{D}\in M$ be a nice $\P*\Q$-name for $D$. Then $D=j(h_{\dot{D}})(j"\lambda,\alpha)_{G*H}$ for some $\alpha\leq\theta$ and some function $h_{\dot{D}}$ with domain $P_\kappa\lambda\times\kappa$ and range contained in the set of nice names for dense subsets of a tail of $\P$. Since the sequence of names $\langle j(h_{\dot{D}})(j"\lambda,\alpha)\mid\alpha\leq\theta\rangle$ is in $M$ and $\P_{tail}$ is $\leq\theta$-closed in $M[G][H]$ we can find a condition in $\P_{tail}$ meeting every dense set mentioned by the sequence $\langle j(h_{\dot{D}})(j"\lambda,\alpha)\mid\alpha\leq\theta\rangle$. Since there are $\leq\lambda^+$-many functions from $P_\kappa\lambda\times\kappa$ to the set of nice names for dense subsets of a tail of $\P$, it follows from the fact that $M[G][H]$ is closed under $\lambda$-sequences in $V[G][H]$ that we can construct a descending $\lambda^+$-sequence in $V[G][H]$ that meets each dense subset of $\P_{tail}$ in $M[G][H]$. Let $G_{tail}$ be the $M[G][H]$-generic filter generated by this descending sequence. Then we may lift the embedding in $V[G][H]$ to $j:V[G]\to M[j(G)]$ where $j(G)=G*H*G_{tail}$ and since $G_{tail}\in V[G][H]$ we may use Lemma \ref{groundlemma} to see that  $M[j(G)]$ is closed under $\lambda$-sequences in $V[G][H]$.


\subsection{The factor diagram}

Let $X=\{j(h)(j"\lambda,\theta)\mid h:P_\kappa\lambda\times\kappa\to V[G] \textrm{ where $h\in V[G]$}\}$. Then $X\elesub M[j(G)]$ and we let $k:M_0'\to M[j(G)]$ be the inverse of the Mostowski collapse $\pi:X\to M_0'$ and let $j_0:V[G]\to M_0'$ be defined by $j_0:=k^{-1}\circ j$. It follows that $j_0$ is the ultrapower embedding by the measure $\mu:=\{X\subseteq P_\kappa\lambda\times\kappa\mid (j"\lambda,\theta)\in j(X)\}$ where $\mu\in V[G][H]$. By elementarity, $M_0'$ is of the form $M_0[j_0(G)]$, where $M_0\subseteq M_0'$ and $j_0(G)\subseteq j_0(\P)\in M_0'$ is $M_0$-generic. Furthermore, $j_0(G)=G*H_0*G^0_{tail}$ where $H_0$ is $M_0[G]$-generic for $\Add(\kappa,\pi(\theta))^{M_0[G]}$ and $G^0_{tail}$ is $M[G][H_0]$-generic for the tail of the iteration $j_0(\P)$ above $\kappa$. The following diagram is commutative.
$$
\xymatrix{
V[G] \ar[r]^j \ar[rd]_{j_0} & M[j(G)] \\
						 & M_0[j_0(G)] \ar[u]_k \\
}
$$
It follows that $j_0$ is a class of $M_0[j_0(G)]$ which is closed under $\lambda$-sequences in $V[G][H_0]$ and that $j_0(\kappa)>\pi(\theta)$.


\subsection{Outline of the rest of the proof}\label{outline}

We would like to lift $j$ through the stage $\kappa$ forcing, $\Q$. This cannot be accomplished using a master condition argument since $|j"H|=\theta$. In order to lift the embedding we will force with $j_0(\Q)$ over $V[G][H]$ to obtain a generic $g_0$ for $j_0(\Q)$. In subsection \ref{obtainingsection} we will argue that $k"g_0$ generates an $M[j(G)]$-generic $g$ for $j(\Q)$. However, we have no reason to expect that $j"H\subseteq g$ and thus we need to do more work in order to lift the embedding. In subsection \ref{surgerysection} we will use Woodin's method of surgery to modify the filter $g$ to obtain an $M[j(G)]$-generic $g^*$ for $j(\Q)$ with $j"H\subseteq g^*$. Then the embedding lifts to 
\begin{align}
j:V[G][H]\to M[j(G)][g^*]\label{surgerylift}
\end{align}
in $V[G][H][g_0]$ where $j(H)=g^*$.

The embedding (\ref{surgerylift}) does not witness that $\kappa$ is $\lambda$-supercompact in $V[G][H]$ because the embedding is a class of $V[G][H][g_0]$. Under the assumption that we have lifted the embedding as in (\ref{surgerylift}) we will now show that the embedding lifts further to our target model $V[G][H][g_0]$ witnessing that $\kappa$ is $\theta$-tall and $\lambda$-supercompact in $V[G][H][g_0]$. Furthermore, we will show in subsection \ref{preservescardinals} that 
\begin{align}
(2^\kappa\geq\theta)^{V[G][H][g_0]}.\label{GCHresultsalllambda}
\end{align}

Let us first argue, assuming we have $g^*$ as above, that $M[j(G)][g^*]$ is closed under $\lambda$-sequences in $V[G][H][g_0]$. We now show that $j_0(\Q)$ is $\leq\lambda$-distributive in $V[G][H]$. Since $j_0(\kappa)>\lambda$ it follows that $j_0(\Q)$ is $\leq\lambda$-closed in $M_0[j_0(G)]$. Since $M_0[j_0(G)]$ is closed under $\lambda$-sequences in $V[G][H_0]$ it follows that $j_0(\Q)$ is $\leq\lambda$-closed in $V[G][H_0]$ and since $\leq\lambda$-closed forcing remains $\leq\lambda$-distributive in $\lambda^+$-c.c. forcing extensions, it follows that $j_0(\Q)$ is $\leq\lambda$-distributive in $V[G][H]$. Since $M[j(G)]$ is closed under $\lambda$-sequences in $V[G][H]$ and $j_0(\Q)$ is $\leq\lambda$-distributive in $V[G][H]$ it easily follows that $M[j(G)]$ is closed under $\lambda$-sequences in $V[G][H][g_0]$. Since $g^*$ is constructed from $g_0$ we have $g^*\in V[G][H][g_0]$ and it follows that $M[j(G)][g^*]$ is closed under $\lambda$-sequences of ordinals in $V[G][H][g_0]$. By using a well ordering of a sufficient initial segment of the universe $M[j(G)][g^*]$, it follows that $M[j(G)][g^*]$ is closed under $\lambda$-sequences in $V[G][H][g_0]$.

Now we show that the embedding (\ref{surgerylift}) lifts through $j_0(\Q)$. Every element of $M[j(G)][g^*]$ is of the form $j(h)(j"\lambda,\alpha)$ where $h:P_\kappa\lambda\times\kappa\to V[G][H]$ is in $V[G][H]$ and $\alpha\leq\theta$. In other words, $M[j(G)][g^*]$ is generated by $\{(j"\lambda,\alpha)\mid\alpha\leq\theta\}\subseteq P_\kappa\lambda\times\kappa$ over $V[G][H]$. Since $P_\kappa\lambda\times\kappa$ has size $\lambda$ and $j_0(\Q)$ is $\leq\lambda$-distributive, it follows by Lemma \ref{lambdadist} that $j"g_0$ generates an $M[j(G)][g^*]$-generic filter $j(g_0)$ for the poset $j(j_0(\Q))$. Thus $j$ lifts in $V[G][H][g_0]$ to $$j:V[G][H][g_0]\to M[j(G)][g^*][j(g_0)].$$
Since $j(g_0)\in V[G][H][g_0]$ and $M[j(G)][g^*]$ is closed under $\lambda$-sequences in $V[G][H][g_0]$ it follows that $M[j(G)][g^*][j(g_0)]$ is closed under $\lambda$-sequences in $V[G][H][g_0]$. Since $j$ is a lift of the original embedding it still satisfies $j(\kappa)>\theta$. Hence $j$ witnesses that $\kappa$ is $\theta$-tall with closure $\lambda$ in $V[G][H][g_0]$.

To complete the proof of Theorem \ref{alllambda} it remains to carry out the surgery argument and to argue that (\ref{GCHresultsalllambda}) holds.

\subsection{Obtaining the generic on which to perform surgery}\label{obtainingsection}

Let $g_0$ be as in subsection \ref{outline}; that is, $g_0$ is $V[G][H]$-generic for $j_0(\Q)$. In this subsection we will argue that $k"g_0$ generates an $M[j(G)]$-generic for $j(\Q)$. Each $x\in M[j(G)]$ is of the form $x=j(h)(j"\lambda,\alpha)$ for some $\alpha\leq\theta$ and some $h:P_\kappa\lambda\times\kappa\to V[G]$ with $h\in V[G]$. Since $j_0"\lambda\in M_0$ it follows that each $x\in M[j(G)]$ is of the form $k(h)(\alpha)$ for some $\alpha\leq\theta$ and some $h:j_0(\kappa)\to M_0[j_0(G)]$ with $h\in M_0[j_0(G)]$; in fact since $k(h\restrict\pi(\theta))$, where $\pi(\theta)$ is the collapse of $\theta$, still has every $\alpha\leq\theta$ in its domain, we may assume that $h:\pi(\theta)\to M_0$. Let $D$ be an open dense subset of $j(\Q)$ in $M[j(G)]$. Then $D=k(\vec{D})(\alpha)$ for some fixed $\alpha\leq\theta$ where $\vec{D}=\langle D_\beta \mid \beta<\pi(\theta)\rangle$ is a sequence of dense open subsets of $j_0(\Q)$. Since $j_0(\Q)$ is $\leq\pi(\theta)$-distributive we know that $\bar{D}:=\bigcap_{\beta<\pi(\theta)}(D_\beta)$ is open dense in $j_0(\Q)$. Hence there is a condition $p\in g_0\cap \bar{D}$. Then $k(p)\in k"g_0\cap k(\bar{D})$. Now $\bar{D}\subseteq D_\beta=\vec{D}(\beta)$ for each $\beta<\pi(\theta)$ and this implies $k(\bar{D})\subseteq k(\vec{D})(\beta)$ for each $\beta<k(\pi(\theta))=\theta$. It follows that 
$$ k(\bar{D})\subseteq k(\vec{D})(\alpha)=D$$
and hence $k(p)\in k"g_0\cap k(\bar{D})\subseteq D$. Therefore $k"g_0$ generates an $M[j(G)]$-generic filter for $j(\Q)$.





\subsection{Surgery}\label{surgerysection}

Now that we have an $M[j(G)]$-generic $g$ for $j(\Q)$ we use Woodin's method of surgery to obtain an $M[j(G)]$-generic $g^*$ for $j(\Q)$ with $j"H\subseteq g^*$. We define $g^*$ in terms of $g$ and $j"H$ in the following way. Let $\Delta$ be the set of coordinates $(\alpha,\beta)\in j(\theta)\times j(\kappa)$ such that there is a $p\in H$ such that $(\alpha,\beta)\in\dom(p)$ and $j(p)(\alpha,\beta)\neq g(\alpha,\beta)$ and let $\pi: j(\Q)\to j(\Q)$ be the automorphism induced by flipping bits over coordinates in $\Delta$. Then we let $g^*:=\pi"g$. In other words, we obtain the modified generic $g^*$ by using $g$ except that whenever $g$ and $j"H$ disagree, we change $g$ to match $j"H$.

\begin{center}
\begin{tikzpicture}


\filldraw[thick,fill=gray!30] (0, 1.5) node [left] {$\kappa$} --  (1.5,1.5) -- (1.5,0) node [below] {$\kappa$} -- (0,0) -- (0,1.5);
\filldraw[thick,fill=gray!30] (2.5,0) node [below] {$j(\kappa)$} -- (2.5,1.5) -- (3.2,1.5) -- (3.2,0);
\filldraw[thick,fill=gray!30] (3.8,0) -- (3.8,1.5) -- (4.5,1.5) -- (4.5,0);


\node at ( .75,.75) [circle,draw=none] {$j"H$};
\node at ( 2,2) [circle,draw=none] {$g$};

\draw[thick] (0, 2.5) -- (5, 2.5) -- (5,0) node [below] {$j(\theta)$} -- (0,0) -- (0,2.5)
	node [left] {$j(\kappa)$};


\end{tikzpicture}
\end{center}


Since $j$ is continuous at regular cardinals $\geq\lambda^+$ the key lemma applies and we use this to show that $g^*$ is a generic filter on $j(\Q)$. First note that if $p\in j(\Q)$ then $|p|^{M[j(G)]}<j(\kappa)$ and so the set of coordinates on which $p^*:=\pi(p)\neq p$ has size $\leq\lambda$ by the key lemma and is thus in $M[j(G)]$ since $M[j(G)]^\lambda\subseteq M[j(G)]$ in $V[G][H][g_0]$. This implies that $p^*\in M[j(G)]$ and thus that $g^*$ defines a filter in $M[j(G)]$.

Now we show that $g^*$ is $M[j(G)]$-generic for $j(\Q)$. Let $A\subseteq j(\Q)$ be a maximal antichain in $M[j(G)]$. Since $j(\Q)$ has the $j(\kappa)^+$-c.c. we have $|A|^{M[j(G)]}\leq j(\kappa)$. Furthermore, each $p\in A$ has $|p|^{M[j(G)]}<j(\kappa)$. Hence $|\bigcup_{p\in A}\dom(p)|^{M[j(G)]}\leq j(\kappa)$. By the key lemma, the set of coordinates mentioned by conditions in $A$ that were involved in the changes we made from $g$ to $g^*$ has size $\leq \lambda$, call this set $\Delta_A$. In other words, $\Delta_A:=\Delta\cap\left(\bigcup_{p\in A}\dom(p)\right)$. Let $\pi_A:j(\Q)\to j(\Q)$ be the automorphism induced by flipping bits over coordinates in $\Delta_A$. The coordinates of bits that get flipped by $\pi_A$ are contained in the domain of the antichain (see the shaded region in the figure below).

\begin{center}
\begin{tikzpicture}


\def\recta{(0, 1.5) node [left] {$\kappa$} --  (1.5,1.5) -- (1.5,0) node [below] {$\kappa$} -- (0,0) -- (0,1.5)}
	
\def\rectb{(2.5,0) node [below] {$j(\kappa)$} -- (2.5,1.5) -- (3.2,1.5) -- (3.2,0)}

\def\rectc{(3.8,0) -- (3.8,1.5) -- (4.5,1.5) -- (4.5,0)}

\filldraw[thick,fill=gray!30] \recta;

\filldraw[thick,fill=gray!30] \rectb;

\filldraw[thick,fill=gray!30] \rectc;


\node at ( .75,.75) [circle,draw=none] {$j"H$};
\node at ( 0.4,2) [circle,draw=none] {$g$};

\draw[thick] (0, 2.5) -- (5, 2.5) -- (5,0) node [below] {$j(\theta)$} -- (0,0) -- (0,2.5)
	node [left] {$j(\kappa)$};


\def\circlea{(3.4,1.4) circle (0.7)}
\def\circleb{(2.6,1.5) circle (0.5)}
\def\circlec{(2.5,1.2) circle (0.5)}
\def\circled{(2.1,1.3) circle (0.9)}
\def\circlee{(3.2,1.8) circle (0.6)}

\begin{scope}
\clip \circled;
\fill[gray!80] \recta;
\end{scope}

\begin{scope}
\clip \circled;
\fill[gray!80] \rectb;
\end{scope}

\begin{scope}
\clip \circlea;
\fill[gray!80] \rectb;
\end{scope}

\begin{scope}
\clip \circlea;
\fill[gray!80] \rectc;
\end{scope}

\draw \circlea;
\draw \circlec;
\draw \circled;
\draw \circlee;

\node at ( 3.4,1.8) [circle,draw=none] {$A$};

\end{tikzpicture}
\end{center}
Since $|\Delta_A|\leq\lambda$ we have $\Delta_A\in M[j(G)]$ and it follows that $\pi_A\in M[j(G)]$. Then $\pi_A^{-1}"A$ is a maximal antichain of $j(\Q)$ and by genericity of $g$ there is a condition $p\in  g$ that decides $\pi_A^{-1}"A$. It follows that $\pi(p)\in g^*$  decides $A$ since $\pi"A = \pi_A"A$. This establishes that $g^*$ is $M[j(G)]$-generic for $j(\Q)$.



Since we arranged $j"H\subseteq g^*$ by definition, we may use Lemma \ref{liftingcriterion} to lift the embedding to $j:V[G][H]\to M[j(G)][j(H)]$ where $j(H)=g^*$. Since we used $g_0$ to define $g^*$, this lift is a class of $V[G][H][g_0]$. We argued above (in the outline given in section \ref{outline}) that we can lift the embedding further through the $g_0$ forcing. So all that remains is to show that $j_0(\Q)$ preserves cardinals and that $2^\kappa\geq\theta$ in $V[G][H][g_0]$.

\subsection{Preserving $2^\kappa\geq\theta$ in $V[G][H][g_0]$}\label{preservescardinals}

We have already argued that $j_0(\Q)$ is $\leq\lambda$-distributive in $V[G][H]$ and we will now argue that $j_0(\Q)$ is $\lambda^{++}$-c.c. From this it follows that $j_0(\Q)$ preserves cardinals over $V[G][H]$ and $2^\kappa\geq\theta$ in $V[G][H]$. Each condition $p\in j_0(\Q)$ is in $M_0[j_0(G)]$ and is thus of the form $p=j_0(h_p)(j_0"\lambda,\theta)$ for some $h_p:P_\kappa\lambda\times\kappa\to \Q$ with $h\in V[G]$. For each $p\in j_0(\Q)$, $\dom(h_p)$ has size $\lambda$ in $V[G]$ and thus $h_p$ leads to a function $\bar{h}_p:\lambda\to \Q$, which can be viewed as a condition in the full support product of $\lambda$-many copies of $\Q$ as defined in $V[G]$, which we denote by $\overline{\Q}$. We will show that $j_0(\Q)$ is $\lambda^{++}$-c.c. in $V[G][H]$ by arguing that $\bar{\Q}$ is $\lambda^{++}$-c.c. in $V[G][H]$ and that an antichain of $j_0(\Q)$ of size $\lambda^{++}$ in $V[G][H]$ would lead to an antichain of $\bar{\Q}$ of size $\lambda^{++}$ in $V[G][H]$.

\begin{claim}\label{Qbarlemma}
$\bar{\Q}$ is $\lambda^{++}$-c.c. in $V[G][H]$
\end{claim}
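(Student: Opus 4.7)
The plan is to verify $\lambda^{++}$-c.c.\ for the product $\Q\times\bar\Q$ inside $V[G]$, and then to transfer it up to $V[G][H]$. The transfer relies on the standard observation that if a two-step iteration is $\mu$-c.c., then forcing with the first factor preserves the $\mu$-c.c.\ of the second factor: an antichain of the second step of size $\mu$ below some condition in the first step lifts, via its name, to an antichain of the same size in the full iteration. Applied with first factor $\Q$ and second factor $\check{\bar{\Q}}$, and using $\bar\Q\in V[G]$ so that $\Q*\check{\bar{\Q}}=\Q\times\bar\Q$ as forcings in $V[G]$, this reduces the claim to showing the product is $\lambda^{++}$-c.c.\ in $V[G]$.

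First I would record the cardinal arithmetic in $V[G]$. Since $V\models 2^\lambda=\lambda^+$ was arranged at the start of the proof and $|\P|^V=\kappa\leq\lambda$, a nice-name count gives $V[G]\models 2^\lambda=\lambda^+$, and likewise $\lambda^{<\kappa}=\lambda$ persists in $V[G]$. A condition in $\Q\times\bar\Q$ may be viewed as a partial function from $\lambda\times\theta\times\kappa$ to $2$ whose restriction to each slice $\{\alpha\}\times\theta\times\kappa$ has size $<\kappa$, so of total size at most $\lambda$. Given any family $\{p_\xi:\xi<\lambda^{++}\}$ of such conditions in $V[G]$, the $\Delta$-system lemma applies in $V[G]$---the required hypothesis $\sigma^\lambda<\lambda^{++}$ for $\sigma<\lambda^{++}$ is immediate from $2^\lambda=\lambda^+$---and yields a $\Delta$-subsystem of size $\lambda^{++}$ with common root $\Delta\subseteq\lambda\times\theta\times\kappa$ of size $\leq\lambda$. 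Since there are at most $2^{|\Delta|}\leq 2^\lambda=\lambda^+$ possible $\{0,1\}$-valued assignments on $\Delta$, a further pigeonhole refinement produces a subfamily of size $\lambda^{++}$ whose conditions all agree on $\Delta$. Any two conditions in this refined family then admit their union as a common refinement---on each coordinate the slice remains of size $<\kappa$ by regularity of $\kappa$---so they are compatible. This contradicts antichain-ness and establishes that $\Q\times\bar\Q$ is $\lambda^{++}$-c.c.\ in $V[G]$; combined with the transfer step above, $\bar\Q$ is $\lambda^{++}$-c.c.\ in $V[G][H]$.

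The main obstacle, and the reason for this indirect route, is that a direct $\Delta$-system argument inside $V[G][H]$ is unavailable: the Cohen forcing $\Q=\Add(\kappa,\theta)^{V[G]}$ adds $\theta$ many subsets of $\kappa$, pushing $2^\lambda$ up to $\theta$ in $V[G][H]$. Consequently, the root-assignment pigeonhole inside $V[G][H]$ would demand $\theta$ many colors and at best yield $\theta^+$-c.c.\ for $\bar\Q$. Working in $V[G]$, where $2^\lambda$ is still $\lambda^+$, and then absorbing $\Q$ into the product via the two-step iteration trick, is precisely what sidesteps this obstruction.
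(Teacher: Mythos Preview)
Your proof is correct and follows essentially the same approach as the paper. The paper first asserts (without detail) that $\bar\Q$ is $\lambda^{++}$-c.c.\ in $V[G]$ by a $\Delta$-system argument, then explicitly constructs from a putative antichain $A\in V[G][H]$ of size $\delta$ an antichain $W=\{(q_\alpha,p_\alpha):\alpha<\delta\}\subseteq\Q\times\bar\Q$ in $V[G]$, invoking $\bar\Q\cong\Q\times\bar\Q$ to conclude $\delta<\lambda^{++}$; your proof instead gives the $\Delta$-system details for $\Q\times\bar\Q$ directly and packages the transfer step as the standard two-step iteration lemma, which is exactly the argument the paper writes out by hand.
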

\begin{proof}[Proof of claim]

By a delta system argument $\bar{\Q}$ is $\lambda^{++}$-c.c. in $V[G]$. Suppose $A\in V[G][H]$ is an antichain of $\bar{\Q}$ with $|A|=\delta$. We will show that $A$ leads to an antichain of size $\delta$ of $\bar{\Q}\cong \Q\times\bar{\Q}$ in $V[G]$ and thus that $\delta<\lambda^{++}$. Let 
$$q\forces \dot{A}\textrm{ is an antichain of $\bar{\Q}$ and }\dot{f}:\delta\to \dot{A}\textrm{ is bijective}$$ where $q\in \Q\cap H$ and $\dot{A}_H=A$. For each $\alpha<\delta$ let $q_\alpha\leq q$ be such that $q_\alpha\forces \dot{f}(\check{\alpha})=\check{p}_\alpha$ where $p_\alpha\in \bar{\Q}$. We have $\bar{\Q}\cong\Q\times\bar{\Q}$ in $V[G]$ and we now show that $W:=\{(q_\alpha,p_\alpha)\in \Q\times\bar{\Q}\mid\alpha<\delta\}$ is an antichain of size $\delta$ of $\Q\times\bar{\Q}$ in $V[G]$. Clearly $W\in V[G]$ because in choosing the pairs $(q_\alpha,p_\alpha)$ in $W$ we only used the forcing relation $\forces_{\Q}$. Suppose for a contradiction that $W$ is not an antichain, i.e. that $(q^*,p^*)\leq(q_\alpha,p_\alpha),(q_\beta,p_\beta)$ for some $\alpha,\beta<\delta$ with $\alpha\neq\beta$ and some $(q^*,p^*)\in \Q\times\bar{\Q}$. Let $H^*$ be $V[G]$-generic for $\Q$ with $q^*\in H^*$. Since $q^*\leq q$ it follows that $\dot{f}_{H^*}$ enumerates an antichain. Furthermore we have $\dot{f}_{H^*}(\alpha)=p_\alpha$, $\dot{f}_{H^*}(\beta)=p_\beta$, and $p^*\leq p_\alpha, p_\beta$, a contradiction. Hence we conclude that $W$ is an antichain of $\Q\times\bar{\Q}$ in $V[G]$ and since $\bar{\Q}\cong\Q\times\bar{\Q}$ we see that $W$ leads to an antichain of $\bar{\Q}$ of size $\delta$ in $V[G]$. Therefore, $\delta<\lambda^{++}$. Hence we conclude that an antichain $A$ of $\bar{\Q}$ in $V[G][H]$ must have size $<\lambda^{++}$.
\end{proof}

Now we complete the proof of Theorem \ref{alllambda} by showing that $j_0(\Q)$ is $\lambda^{++}$-c.c. in $V[G][H]$. Suppose that in $V[G][H]$, $j_0(\Q)$ has an antichain $A$ of size $\delta$. For each $p\in A\subseteq j_0(\Q)$ let $h_p:P_\kappa\lambda\times\kappa\to \Q$ be such that $p=j_0(h_p)(j"\lambda,\theta)$ where $h_p\in V[G]$. As above each $h_p$ yields a condition in $\bar{\Q}$, call it $\bar{h}_p$. For $p,q\in A$ we have $j_0(h_p)(j"\lambda,\theta)\perp_{j_0(\Q)} j_0(h_q)(j"\lambda,\theta)$, and thus by elementarity we conclude that there is a $(\sigma,\alpha)\in P_\kappa\lambda\times\kappa$ such that $h_p(\sigma,\alpha)\perp h_q(\sigma,\alpha)$. This implies that $\bar{A}:=\{\bar{h}_p\mid p\in A\}$ is an antichain in $\bar{\Q}$ where $|\bar{A}|=\delta$. By Claim \ref{Qbarlemma}, $\delta<\lambda^{++}$. Thus $j_0(\Q)$ is $\lambda^{++}$-c.c. in $V[G][H]$.

Thus we have shown that in $V[G][H][g_0]$, $\kappa$ is $\lambda$-supercompact and $\theta$-tall, and $2^\lambda\geq\theta$.

Let us argue that cardinals in $[\kappa,\lambda^+]\cup(2^\lambda,\infty)$ are preserved. We started with a model and forced $2^\lambda=\lambda^+$ which may have collapsed cardinals in $(\lambda^+, 2^\lambda]$. We then add a fast function using $\kappa^+$-c.c. forcing which preserves cardinals $\geq\kappa$. The remaining forcing is $\P*\Q*j_0(\Q)$ where $\P*\Q$ is $\kappa^+$-c.c. and $j_0(\Q)$ preserves cardinals over $V[G][H]$. Thus in the final model $V[G][H][g_0]$ we have preserved cardinals in the interval $[\kappa,\lambda^+]\cup (2^\lambda,\infty)$.

\end{proof}


\end{document}